\newtheorem{thm}{Theorem}[section]
\newtheorem{cor}[thm]{Corollary}
\newtheorem{prop}[thm]{Proposition}
\theoremstyle{definition}
\newtheorem{defn}[thm]{Definition}
\theoremstyle{remark}
\newtheorem{rem}[thm]{Remark}
\newtheorem{example}[thm]{Example}
\numberwithin{equation}{section}
\newcommand{\Real}{\mathbb R}
\newcommand{\rn}[1]{\mathbb{R}^{#1}}
\newcommand{\mylabel}[1]{{\label{#1}}}
\begin{document}
\title{ Existence of positive definite noncoercive sums of squares in $\Real [x_1,\ldots,x_n]$}%
\author{Gregory C. Verchota}%
\thanks{The author gratefully acknowledges partial support provided by
the National Science Foundation  through award DMS-0401159}
\address{215 Carnegie \\ Syracuse University\\ Syracuse NY 13244}%
\email{gverchot@syr.edu}%
\subjclass{12D15,11E25,35J30,35J40}%
\date{\today}%
\begin{abstract}
Positive definite forms $f\in\mathbb{R}[x_1,\ldots,x_n]$  which are
sums of squares of forms of $\mathbb{R}[x_1,\ldots,x_n]$ are
constructed to have the additional property that the members of any
collection of forms whose squares sum to $f$ must share a nontrivial
complex root in $\mathbb{C}^n$.
\end{abstract}
\maketitle
\section{Introduction}\mylabel{Introduction}
Let $f\in \Real [x_1,\ldots,x_n]$ be a \textit{form}, i.e.
homogeneous polynomial.  Suppose $f$ is a \textit{sum of squares}
($sos$) of  forms in $\Real [x_1,\ldots,x_n]$ and is
\textit{positive definite} ($pd$),
  $f(\mathbf{a})>0$ for all $ \mathbf{a}\in\rn n\setminus
  \{\mathbf{0}\}$.
 Writing $f=\sum p_j^2$ \; this is equivalent to saying that the forms $p_j$
 share no common nontrivial real root from $\rn n$.

 \begin{multline}\mylabel{question}
 \mbox{\textit{Suppose a \emph{positive definite} form} $f$ \textit{has at least one} sos \textit{representation}.
  \textit{Does} $f$ \textit{necessar-}}\\\mbox{\textit{ ily have a representation} $f=\sum q_k^2$ }\mbox{\textit{with}
  $q_k\in \mathbb{R}[x_1,\ldots,x_n]\;\;  $\textit{and the} $q_k$ \textit{sharing no}\hskip.5in}\\\mbox{\textit{  common complex root from} $\mathbb{C}^n\setminus
  \{\mathbf{0}\}$?\hskip2.9in}
 \end{multline}

For example,\vskip.1in

\noindent (i) the \textit{positive semi-definite} ($psd$)
$x_1^2=p^2\in\mathbb{R}[x_1,x_2,x_3]$ is uniquely represented as an
$sos$, and $p(0,1,i)=0$;\vskip.1in

\noindent (ii)  $x_1^2+x_2^2\in\mathbb{R}[x_1,x_2]$ is $pd$ with
$x_1$ and $x_2$ sharing no common nontrivial complex root;\vskip.1in

\noindent (iii) $f=(x_1^2+x_2^2)^2=p^2$ is $pd$ with the quadratic
form $p$ having the root $(1,i)\in\mathbb{C}^2$.  But also $f=
(x_1^2)^2+(\sqrt{2}x_1x_2)^2+(x_2^2)^2$ or
$(x_1^2-x_2^2)^2+(2x_1x_2)^2$ and in each case the quadratic forms
now share no common nontrivial complex root.\vskip.1in

Though not the subject of this article, the study of boundary value
problems for elliptic partial differential equations (PDE) motivates
question \eqref{question}. Denote by
$\partial=(\partial_1,\ldots,\partial_n)=(\frac{\partial}{\partial
x_1},\ldots,\frac{\partial}{\partial x_n})$ the vector of first
partial derivatives for $\mathbb{R}^n$.  Let
$\alpha\in\mathbb{N}^n_0$ denote a multi-index.  Define
$|\alpha|=\alpha_1+\cdots+\alpha_n$ and
$\partial^\alpha=\partial_1^{\alpha_1}\cdots\partial_n^{\alpha_n}$.

A theorem of N. Aronszajn and K. T. Smith \cite{Agm65} may be stated
as

\textit{Let} $p_1,\ldots,p_r\in\mathbb{R}[x_1,\ldots,x_n]$
\textit{be forms of degree} $d$.  \textit{Let}
$\Omega\subset\mathbb{R}^n$ \textit{be a bounded open connected set
with suitably regular boundary and let} $\overline{\Omega}$
\textit{be its closure.} \textit{Then the} integro-differential
quadratic form
\begin{equation}\mylabel{intform}
\sum_j\int_\Omega|p_j(\partial)u|^2dx
\end{equation}
\textit{is} coercive \textit{over all functions} $u$ \textit{which
have continuous partial derivatives of order} $d$ \textit{in}
$\Omega$ \textit{that extend continuously to} $\overline{\Omega}$ if
and only if \textit{the system}
$$p_1=p_2=\cdots=p_r=0$$
\textit{has no solution} $\mathbf{a}\in
\mathbb{C}^n\setminus\{\mathbf{0}\}$.\vskip.1in

For \eqref{intform} to be \textit{coercive} over the collection of
functions $u$ it is required, by definition, that there be constants
$C>0$ and $c_0\in \mathbb{R}$ independent of the functions $u$ so
that
\begin{equation}\mylabel{pdecoercive}
\sum_j\int_\Omega|p_j(\partial)u|^2dx\geq
C\int_\Omega\sum_{|\alpha|\leq d} |\partial^\alpha u|^2
dx-c_0\int_\Omega | u|^2 dx
\end{equation}
for all $u$ in the collection.  Once this estimate is obtained
various elliptic boundary value problems can be solved.

The Aronszajn-Smith theorem gives a precise algebraic
characterization of all integro-differential forms \eqref{intform}
for which the coercive estimate \eqref{pdecoercive} can hold.  The
integro-differential forms \eqref{intform} are termed
\textit{formally positive} because of their \textit{sos} shape.  S.
Agmon \cite{Agm58} improved this result by proving a necessary and
sufficient (and more complicated) algebraic condition on all
integro-differential forms
\begin{equation}\mylabel{intdiffform}
Re\sum_{|\alpha|\leq d}\sum_{|\beta|\leq d}\int_\Omega a_{\alpha
\beta}\partial^\alpha u \overline{\partial^\beta u} dx
\end{equation}
not only the formally positive, that give rise to self-adjoint
linear properly elliptic differential operators
\begin{equation}\mylabel{diffop}
L(\partial)=\sum_{|\alpha|\leq d}\sum_{|\beta|\leq d}a_{\alpha
\beta}\partial^{\alpha +\beta}
\end{equation}
and their regular boundary value problems \cite{Agm58}\cite{Agm60}.
When $a_{\alpha\beta}\in \mathbb{R}$ and the integro-differential
form is formally positive, $L$ corresponds to a polynomial $f$  of
degree $2d$ that is a sum of squares.

With his algebraic characterization Agmon solved completely the
\textit{coerciveness problem for integro-differential forms} in the
theory of linear PDE.  However, the \textit{coerciveness problem for
linear differential operators} $L(\partial)=\sum_{|\alpha|\leq
2d}a_\alpha
\partial^\alpha$ has not been solved.  This problem can be stated
in a way that leads back to the question about sums of squares in
$\mathbb{R}[x_1,\ldots,x_n]$.

Instead of the integro-differential form one begins with the
homogeneous constant coefficient operator in $\mathbb{R}^n$
$$L(\partial)=\sum_{|\alpha|=
2d}a_\alpha
\partial^\alpha$$ $a_\alpha\in\mathbb{R}$.  These will be
self-adjoint.  Suppose $L$ is \textit{elliptic} (equivalent to
properly elliptic in this setting) $L(\xi)>0$ for all $\xi\in
\mathbb{R}^n\setminus\{\mathbf{0}\}$.  In general $L$ can be
rewritten an infinity of ways in the shape \eqref{diffop}
\begin{equation}\mylabel{homdiffop}
L(\partial)=\sum_{|\alpha|=|\beta|= d}a_{\alpha
\beta}\partial^{\alpha +\beta}
\end{equation}
and therefore admits an infinity of integro-differential forms
\eqref{intdiffform}.  Is there any choice of rewriting
\eqref{homdiffop} that yields a coercive estimate?

This fundamental question is broader than what can be answered here.
Instead the question will be specialized to the setting of the
Aronszajn-Smith theorem.

Suppose it is further known that the homogeneous differential
operator is an
 \textit{sos}, $L(\partial)=\sum p_j^2(\partial)$.
Then the theorem provides the necessary and sufficient algebraic
condition for the integro-differential form \eqref{intform} to be
coercive \eqref{pdecoercive}.  If the form were to fail the
algebraic condition and thus fail to be coercive is there another
way to write the differential operator $L$ \textit{as a sum of
squares} and thereby use the theorem again to obtain the coercive
estimate for a new integro-differential form associated to $L$ and
thus  solve boundary value problems for $L$?  This is question
\eqref{question}.

All the results and proofs of this article are independent of these
 PDE considerations.  Some more will be said about PDE in the last
 section.

 \begin{defn}
$f\in\mathbb{R}[x_1,\ldots,x_n]$ is called a \emph{sum of squares}
(an \emph{sos}) if there exist polynomials
$p_1,\ldots,p_r\in\mathbb{R}[x_1,\ldots,x_n]$ so that $f$ has the
representation $f=\sum_{j=1}^r p_j^2$

 \end{defn}

\begin{defn}
An $sos$ $f\in\mathbb{R}[x_1,\ldots,x_n]$ is called \emph{coercive}
or a \emph{coercive sum of squares} if there exists a representation
\begin{equation}\mylabel{sos}
f=\sum_{j=1}^r p_j^2
\end{equation}
with $p_1,\ldots,p_r\in\mathbb{R}[x_1,\ldots,x_n]$ such that there
are \textit{no} solutions $\mathbf{a}\in
\mathbb{C}^n\setminus\{\mathbf{0}\}$ to the system
\begin{equation}\mylabel{system}
p_1=\cdots=p_r=0
\end{equation}
When such an $f$ is homogeneous it is also called a \emph{coercive
form}.
 \end{defn}

 To be clear
 \begin{defn}
An $sos$ $f\in\mathbb{R}[x_1,\ldots,x_n]$ is called
\emph{noncoercive} or a \emph{noncoercive sos} if there exists a
representation \eqref{sos} for $f$ and if \emph{every} such
representation has a nontrivial solution in $\mathbb{C}^n$ to the
\emph{corresponding system} \eqref{system}.
 \end{defn}

Question \eqref{question} asks if every positive definite $sos$ is
coercive.  The aim of this article is to establish, by construction,
the existence of
 positive definite noncoercive sums of squares. That this
can be done is related to the well known fact that \textit{not every
positive definite polynomial is a sum of squares}.

If every $pd$ polynomial were an $sos$ the answer to question
\eqref{question} would be \textit{yes}. This follows because
positive definiteness of $f$ allows
\begin{equation}\mylabel{bracket}
f=[f-\epsilon (x_1^{2d}+\cdots+x_n^{2d})]+\epsilon (x_1^{2d}+\cdots)
\end{equation}
with the bracketed term $pd$ for $\epsilon>0$ small enough.  When
the bracketed term is an $sos$, \eqref{bracket} is an $sos$
representation for $f$ that satisfies the definition of coercive
$sos$.

We adopt standard notations for $psd$ homogeneous polynomials
\cite{CL77}\cite{BCR98} p.111.  $P_{n,d}$ denotes the set of
$f\in\mathbb{R}[x_1,\ldots,x_n]$ homogeneous of degree $d$ that are
\textit{nonnegative} on $\mathbb{R}^n$.  $\Sigma_{n,d}$ denotes the
set of all $f\in P_{n,d}$ that are $sos$.  These sets are nonempty
only when $d$ is an even number.

\textsc{For the remainder of this article all polynomials will be
homogeneous polynomials, or forms.}

(Homogenization can be used for other statements.)

The argument given above together with Hilbert's results on positive
polynomials that are $sos$ \cite{Hil88}, \cite{Rez07} immediately
yields the Theorem
\begin{multline}\mylabel{positiveresult}
\mbox{\textit{If} $n\leq 2$ \textit{and} $d$ \textit{is an even
natural number, or if} $d=2$ \textit{and} $n$ \textit{is a natural
number, or if}}\\\mbox{ $(n,d)=(3,4)$\textit{, then every} $pd$
\textit{form of} $P_{n,d}$ \textit{is a coercive sum of
squares.}\hskip2in}
\end{multline}
The result of Hilbert \cite{Raj93}, \cite{Swa00}, \cite{Rud00},
\cite{Pfi04}, \cite{PR00} used here is that $P_{3,4}=\Sigma_{3,4}$,
while $P_{2,2p}=\Sigma_{2,2p}$ and $P_{n,2}=\Sigma_{n,2}$ are
elementary. See \cite{BCR98} pp.111-112.

Hilbert further proved that in every other  case $\Sigma_{n,2p}$ is
a proper subset of $P_{n,2p}$, eliminating the argument based on
\eqref{bracket}.  It was T. S. Motzkin \cite{Mot67} who first
published explicit examples of positive semi-definite polynomials
that were not $sos$. There are now various examples of these, e.g.
\cite{Rob73},\cite{CL77},\cite{CL77b},\cite{LL78}; see \cite{Rez00}
for more. We found two of these to be very useful for the purpose
here. Both are of Motzkin type and due to M. D. Choi and T. Y. Lam.

\begin{equation}\mylabel{q}
q(w,x,y,z)=w^4+x^2y^2+y^2z^2+z^2x^2-4wxyz
\end{equation}
and
$$s(x,y,z)=x^4y^2+y^4z^2+z^4x^2-3x^2y^2z^2$$
Both are nonnegative ($psd$) by the arithmetic-geometric mean
inequality and neither is an $sos$.  Thus $q\in
P_{4,4}\setminus\Sigma_{4,4}$ and $s\in
P_{3,6}\setminus\Sigma_{3,6}$.

For $\eta\geq 0$ define
\begin{equation}\mylabel{qeta}
q_\eta=q+\eta(x^4+y^4+z^4)
\end{equation}
$$s_\eta=s+\eta(x^6+y^6+z^6)$$
For $\eta>0$,  $q_\eta$ and $s_\eta$ are $pd$.  As long as $\eta$ is
small enough each is not an $sos$.  This follows by an elementary
topological argument first given by R. M. Robinson \cite{Rob73}
pp.267-268 which, moreover, shows the sets $\Sigma$ to be
topologically  closed sets.  It is also true that for all $\eta$
large enough $q_\eta$ and $s_\eta$ are $sos$.  See, for example,
p.269 of \cite{Rob73} (in the case of  $q_\eta$ it can be verified
that the $w^4$ term in $q$ obviates the need to add $\eta w^4$).
Consequently for each polynomial there is a smallest value of
$\eta$, $\eta_0>0$, that makes $q_\eta$ or $s_\eta$ $sos$ (cf. also
the proof of Corollary 5.6 \cite{CLR95} p.122).  In Section
\ref{quartic} it is shown for the quartic $q$ that the square root
of this value is the smallest positive root of
$X^3-\frac{1}{2}X+\frac{1}{9}=0$, and that
\begin{multline}\mylabel{sosquartic}
q_{\eta_0}(w,x,y,z)=(w^2-\sqrt{\eta_0}(x^2+y^2+z^2))^2+\\\dfrac{2}{9\sqrt{\eta_0}}[(3\sqrt{\eta_0}wx-yz)^2+(3\sqrt{\eta_0}wy-zx)^2+(3\sqrt{\eta_0}wz-xy)^2]
\end{multline}
In addition, it is proved that there is exactly one Gram matrix (or
Gramian \cite{Gel89}) that represents the polynomial $q_{\eta_0}$.
This means that every other $sos$ representation for $q_{\eta_0}$ is
merely a sum of squares of quadratics that are linear combinations
of the quadratics of \eqref{sosquartic}.  Thus any common complex
roots must be the same among all representations.

The Gram matrix method of Choi, Lam and B. Reznick \cite{CLR95},
used for studying $sos$ representations of polynomials, is put into
a tensor setting in Section \ref{setup}.  Every form  of degree $2p$
is nonuniquely represented  by a symmetric matrix (rank-2 symmetric
tensor) acting as a quadratic form on the vector space of rank-p
symmetric tensors.  These are termed \textit{representation
matrices} for the form.  The Gram matrices are those representation
matrices that are $psd$, necessary and sufficient for an $sos$
representation.

The polynomial \eqref{sosquartic} provides an example of a positive
definite quartic with a unique Gram matrix.  A positive definite
sextic with a unique Gram matrix has  previously been identified by
Reznick in \cite{Pra06}.  It is like the ones that will be
constructed in Section \ref{cubic} from the $s_\eta$.

However wonderful it is, $q_{\eta_0}$ is coercive.  It is proved in
Section \ref{proof1} that
\begin{equation}\mylabel{noncoercsos}
(u^2+v^2+vw)^2+q_{\eta_0}(w,x,y,z)
\end{equation}
is \textit{positive definite} and \textit{noncoercive} in
$\Sigma_{6,4}$.  In effect the uniqueness of representation of
\eqref{sosquartic} and the presence of the monomial $vw$ forces a
uniqueness of representation upon \eqref{noncoercsos}, while
$(1,i,0,0,0,0)$ is a solution to the corresponding system of
quadratic equations \eqref{system}.  It follows from the definition
of coercive $sos$ that any form $f\in\mathbb{R}[x_1,\ldots,x_n]$ of
even degree $d$ such that $f+x_{n+1}^d$ is a coercive $sos$ must
itself be a coercive $sos$. Consequently monomials $x_7^4, x_8^4,
\ldots$ can be added to \eqref{noncoercsos} preserving all required
properties and the following theorem and partial answer to question
\eqref{question} is obtained.

\begin{thm}\mylabel{thm1}
For $n\geq 6$, $\Sigma_{n,4}$ contains polynomials that are positive
definite and noncoercive.
\end{thm}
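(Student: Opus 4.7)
The plan is to verify that the form $F = (u^2+v^2+vw)^2 + q_{\eta_0}(w,x,y,z)$ of \eqref{noncoercsos} realizes the claim for $n=6$, and then to extend to $n\geq 7$ by adjoining pure fourth powers. Positive definiteness is immediate: if $F$ vanishes at a real point, then positive definiteness of $q_{\eta_0}$ forces $w=x=y=z=0$, after which $u^2+v^2=0$ forces $u=v=0$. Membership in $\Sigma_{6,4}$ follows by combining $(u^2+v^2+vw)^2$ with \eqref{sosquartic} to display $F$ as a sum of five squares; each of these five quadratics vanishes at $\mathbf{a}=(1,i,0,0,0,0)\in\mathbb{C}^6$, so this particular representation has a nontrivial common complex root.

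The heart of the proof is to show that \emph{every} $sos$ representation $F=\sum_{k=1}^r q_k^2$ shares such a root. Split each $q_k$ by bihomogeneous degree in the two variable groups $\{u,v\}$ and $\{w,x,y,z\}$ as $q_k = A_k+B_k+C_k$, where $A_k=\alpha_k u^2+\beta_k v^2+\gamma_k uv$, where $B_k=uL_k+vM_k$ with $L_k,M_k$ linear in $w,x,y,z$, and where $C_k$ is quadratic in $w,x,y,z$. Matching the bidegree $(0,4)$ parts of $F$ and $\sum q_k^2$ yields $\sum C_k^2=q_{\eta_0}$, so by the uniqueness of the Gram matrix for $q_{\eta_0}$ proved in Section \ref{quartic}, each $C_k$ is a real linear combination of the four quadratics $Q_0,\ldots,Q_3$ of \eqref{sosquartic}.

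Matching the bidegree $(2,2)$ parts separately over the $\{u,v\}$-monomials $u^2$, $uv$, $v^2$ gives $\sum L_k^2 = -2\sum\alpha_k C_k$, $\sum L_k M_k = -\sum\gamma_k C_k$, and $\sum M_k^2 = w^2-2\sum\beta_k C_k$. Writing $\sum\alpha_k C_k = e_0 Q_0+e_1 Q_1+e_2 Q_2+e_3 Q_3$ and comparing the $w^2$ and $x^2$ coefficients of the first identity forces both $-2e_0\geq 0$ and $2e_0\sqrt{\eta_0}\geq 0$, hence $e_0=0$; the resulting vanishing of two sums of squares of real numbers forces every monomial coefficient of every $L_k$ to be zero, and the mixed-monomial $(2,2)$ coefficients dispose of $e_1, e_2, e_3$. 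Thus $L_k\equiv 0$ and $\sum\alpha_k C_k\equiv 0$. An analogous treatment of $\sum M_k^2 = w^2-2\sum\beta_k C_k$, exploiting the bidegree $(3,1)$ identity $\sum\beta_k M_k = w$ (the coefficient of $v^3 w$ in $F$ being $2$) together with Cauchy-Schwarz via $\sum\beta_k^2=1$, yields $M_k = \beta_k w$ and $\sum\beta_k C_k\equiv 0$.

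The remaining identities then cascade. The bidegree $(3,1)$ coefficient of $u^2 vw$ in $F$ equals $2$, which with $M_k=\beta_k w$ yields $\sum\alpha_k\beta_k=1$; since $\sum\alpha_k^2=\sum\beta_k^2=1$ from bidegree $(4,0)$, equality in Cauchy-Schwarz forces $\alpha_k=\beta_k$ for every $k$. The $u^2 v^2$ coefficient equation $2\sum\alpha_k\beta_k+\sum\gamma_k^2=2$ then forces $\gamma_k=0$. Substituting, $q_k(1,i,0,0,0,0)=(\alpha_k-\beta_k)+i\gamma_k=0$ for every $k$, which is noncoerciveness for $n=6$. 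For $n\geq 7$, adjoin $x_7^4+\cdots+x_n^4$: the resulting form is again positive definite and $sos$, and restricting any $sos$ representation of it to $x_7=\cdots=x_n=0$ yields an $sos$ representation of $F$ to which the $n=6$ case applies, producing a nontrivial common complex root $\mathbf{b}\in\mathbb{C}^6$; then $(\mathbf{b},0,\ldots,0)$ is a common complex root of the original quadratics. The principal obstacle is the sign-rigidity step in paragraph three: the indefiniteness of $Q_0=w^2-\sqrt{\eta_0}(x^2+y^2+z^2)$, whose $w^2$ and $x^2$ coefficients have opposite signs, is what pins $e_0$ to zero and initiates the cascade.
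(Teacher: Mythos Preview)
Your argument is correct. Both you and the paper reduce to the same crucial input, the uniqueness of the Gram matrix for $q_{\eta_0}$ established in Section~\ref{quartic}, and both handle $n\geq 7$ by adjoining pure fourth powers. The core step, however, is carried out quite differently.

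The paper works entirely in the Gram-matrix framework of Section~\ref{setup}: it identifies a distinguished change $\Delta_1$ that any coercive representation would need with positive coefficient, then isolates a $9\times 9$ principal submatrix of $\mathcal{G}_0+2\delta\Delta_1$ and derives a contradiction by chasing signs of $2\times 2$ and $3\times 3$ minors under the additional changes permitted there. Your route is more elementary and more explicit: you split each $q_k$ by bidegree in $(u,v)$ versus $(w,x,y,z)$, match the five bihomogeneous pieces of $F$, and exploit the sign structure of $Q_0=w^2-\sqrt{\eta_0}(x^2+y^2+z^2)$ together with Cauchy--Schwarz (from $\sum\beta_k^2=1$, $\sum\beta_k M_k=w$) to pin down $L_k\equiv 0$, $M_k=\beta_k w$, then $\alpha_k=\beta_k$ and $\gamma_k=0$. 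This bypasses the representation-matrix machinery entirely and makes transparent \emph{which} feature of $q_{\eta_0}$ drives noncoerciveness: the indefiniteness of $Q_0$ on the diagonal monomials, combined with the absence of pure squares in $Q_1,Q_2,Q_3$. The paper's approach, on the other hand, is more systematic and generalizes more readily to other examples (as in Section~\ref{cubic}); yours is tailored to the particular two-block structure of $(u^2+v^2+vw)^2+q_{\eta_0}$.
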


Theorem \ref{thm1} is really a statement about certain cones of
polynomials. After a scaling \eqref{sosquartic} can be rewritten
\begin{multline}\mylabel{conerep}
a_1(x_1^2-\gamma(x_2^2+x_3^2+x_4^2))^2+a_2(x_1x_2-x_3x_4)^2+a_3(x_1x_3-x_4x_2)^2+a_4(x_1x_4-x_2x_3)^2
\end{multline}
where it happens that for all values of $\gamma$, $0<\gamma
<\frac{1}{3}$ and all positive $a_1,\ldots, a_4$, the forms
\eqref{conerep} are $pd$ with a unique Gram matrices.

\begin{cor}
For $n\geq 6$ there exist nonempty collections of quadratic forms
$\{p_1,\ldots,p_r\}\subset\mathbb{R}[x_1,\ldots,x_n]$ so that there
exist no nontrivial solutions from $\mathbb{R}^n$ to the systems
$p_1=p_2=\cdots=p_r=0$, and so that every $f=\sum a_jp_j^2$, with
positive coefficients $a_1,\ldots,a_r$, is a noncoercive $sos$.
\end{cor}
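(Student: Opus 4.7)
The plan is to exhibit an explicit collection drawn from the construction of Theorem~\ref{thm1}. For $n=6$ label the coordinates $u,v,x_1,x_2,x_3,x_4$, fix any $\gamma\in(0,1/3)$, and set
\begin{align*}
p_1&=u^2+v^2+vx_1,\quad p_2=x_1^2-\gamma(x_2^2+x_3^2+x_4^2),\\
p_3&=x_1x_2-x_3x_4,\quad p_4=x_1x_3-x_4x_2,\quad p_5=x_1x_4-x_2x_3.
\end{align*}
For $n>6$ I would adjoin $p_{5+k}=x_{4+k}^2$ for $k=1,\ldots,n-6$; these only add $a_{5+k}x_{4+k}^4$ to any positive combination and preserve every property below.

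First I would check that $p_1=\cdots=p_r=0$ has only the trivial real solution. The case $x_1=0$ forces $x_2=x_3=x_4=0$ by $p_2$, and then $p_1$ forces $u=v=0$. If $x_1\neq 0$, multiplying the three bilinear relations yields $x_1^3(x_2x_3x_4)=(x_2x_3x_4)^2$; a brief case check rules out any single $x_j=0$, so $x_2x_3x_4=x_1^3$, and dividing pairs of the bilinear relations gives $x_j^2=x_1^2$ for $j=2,3,4$. Substitution into $p_2=0$ then forces $\gamma=1/3$, outside the chosen range.

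Next, for arbitrary positive $a_1,\ldots,a_r$ set $f=\sum_{j}a_jp_j^2$. The quartic subsum $\sum_{j=2}^5 a_jp_j^2$ is exactly the form \eqref{conerep}, which by the remark immediately preceding the corollary is $pd$ on $\rn 4$ and has a unique Gram matrix for every positive $a_2,\ldots,a_5$ and every $\gamma\in(0,1/3)$. Combined with the real-root result of the previous paragraph, $f$ is $pd$ on $\rn n$. For noncoercivity I would invoke the forcing argument of Section~\ref{proof1} that proves Theorem~\ref{thm1}: that argument depends only on (i) the uniqueness of the Gramian of the quartic subform in $(x_1,\ldots,x_4)$ and (ii) the presence of the cross monomial $vx_1$ in $p_1$, both of which persist under arbitrary positive rescalings. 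Consequently every $sos$ representation of $f$ is a linear recombination of $p_1,\ldots,p_r$, and the point with $u=1,v=i$ and all other coordinates zero is a common complex zero of all the $p_j$ and hence of every such recombination.

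The main obstacle is this last step: one must verify that the forcing argument of Section~\ref{proof1} uses only the \emph{uniqueness} of the Gram matrix of the conerep subform, not its specific entries, so that noncoercivity propagates across the entire positive cone of coefficients. The parenthetical remark preceding the corollary supplies precisely the uniqueness statement needed.
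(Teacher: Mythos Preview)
Your plan is the paper's intended one, and the real-root check and the appeal to Theorem~\ref{gammaquartic} for the subform in $(x_1,\ldots,x_4)$ are both correct. One sentence, however, misstates what the forcing argument of Section~\ref{proof1} actually proves. You write that ``every $sos$ representation of $f$ is a linear recombination of $p_1,\ldots,p_r$.'' That would be uniqueness of the Gram matrix for the six-variable form $f$, i.e.\ condition (iii)$'$ of \eqref{construction}; Section~\ref{proof1} establishes only the weaker (iii): every Gram matrix for $f$ has $\mathbf{r}=Re\,\mathbf{z}^{\otimes 2}$ and $\mathbf{q}=Im\,\mathbf{z}^{\otimes 2}$ in its null space. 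The six-variable $f$ may have many Gram matrices; the point is that all of them annihilate $\mathbf{z}^{\otimes 2}$, and noncoercivity then follows from \eqref{strategy}. Your conclusion is right, but the stated reason should be corrected.

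Regarding the obstacle you flag, the verification does go through uniformly in the $a_j$. In the principal submatrix \eqref{9by9} the block $[1\;2\;3]$ acquires an overall factor $a_1$ and the block $[4\;5]$ a factor $a_2$, while $a_3,a_4,a_5$ do not enter any of the nine listed positions at all. The determinants used in the contradiction become $\det[1\;3]=2a_1 a$, $\det[3\;4\;5]=-a_2(b-a\gamma)^2$, $\det[1\;2\;3]=-a_1(\delta-c)^2$, $\det[2\;4\;5]=-a_2(e-d\gamma)^2$, so the same chain $a\geq 0\Rightarrow b=a\gamma\Rightarrow b\leq 0\Rightarrow a=0\Rightarrow c=\delta\Rightarrow d>0\Rightarrow e>0$ forces the same contradiction with $[9]$. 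Fixity of $[4\;5]$ still comes from Theorem~\ref{gammaquartic}, which is stated for arbitrary positive $a_2,\ldots,a_5$.
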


The Choi-Lam sextic form $s$ \eqref{q} possesses more structure than
its quartic counterpart $q$.  First it is an \textit{even} form.  A
form $f$ is \textit{even} if it is also a polynomial in
$x_1^2,x_2^2,\ldots,x_n^2$. Second it is \textit{symmetric}.  A form
$f$ is \textit{symmetric} if for every permutation $\sigma$ on $n$
objects $f(\mathbf{x})=f(\sigma(\mathbf{x}))$.  The construction
\eqref{qeta} of the forms $s_\eta$ preserves both of these
properties.  In Section  \ref{cubic}, for $s_\eta(x,y,z)$ with a
unique Gram matrix, it is proved that when $x^2$ is replaced with
$w^2+x^2$ the resulting form is $pd$ and noncoercive.

\begin{thm}\mylabel{thm2}
For $n\geq 4$, $\Sigma_{n,6}$ contains polynomials that are positive
definite and noncoercive.
\end{thm}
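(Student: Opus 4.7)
The plan is to construct the noncoercive form by substitution, following the blueprint of Theorem \ref{thm1}. Section \ref{cubic} will establish that for the critical value $\eta_0$, the sextic $s_{\eta_0}(x,y,z)$ is positive definite and possesses a unique Gram matrix. Writing $s_{\eta_0}(x,y,z) = S(x^2,y^2,z^2)$ for the associated cubic polynomial $S$ in three variables, I define
\[
\tilde{s}(w,x,y,z) \;:=\; S(w^2 + x^2,\, y^2,\, z^2).
\]
For real $(w,x,y,z)$ the triple $(w^2+x^2,\,y^2,\,z^2)$ lies in $[0,\infty)^3$ and vanishes only at the origin of $\Real^4$, so positive definiteness of $s_{\eta_0}$ on $\Real^3\setminus\{0\}$ transfers directly to positive definiteness of $\tilde{s}$ on $\Real^4\setminus\{0\}$.

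To exhibit $\tilde{s}$ as an $sos$ I would exploit the evenness of $s_{\eta_0}$ in each of $x,y,z$. Its unique Gram matrix is block diagonal with respect to the parity grading $(\mathbb{Z}/2)^3$ on cubic monomials in $x,y,z$, so one may take a pure-parity factorization $s_{\eta_0} = \sum_j p_j^2$ with $p_j = x^{\epsilon_x} y^{\epsilon_y} z^{\epsilon_z}\, q_j(x^2,y^2,z^2)$, $(\epsilon_x,\epsilon_y,\epsilon_z) \in (\mathbb{Z}/2)^3$, and $q_j$ linear (or constant, in the $(1,1,1)$ case) without constant term, as forced by the homogeneity of $p_j$. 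After the substitution $x^2 \mapsto w^2 + x^2$ the factor $(x^2)^{\epsilon_x}$ becomes $(w^2+x^2)^{\epsilon_x}$, which is either $1$ or itself the sum of two squares $w^2+x^2$. Hence each transformed $p_j^2$ becomes a sum of at most two squares of cubics in $\Real[w,x,y,z]$, and summing over $j$ yields an explicit $sos$ representation of $\tilde{s}$.

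For noncoerciveness the candidate common complex root is $(w,x,y,z) = (i,1,0,0)$: at this point $w^2+x^2 = 0$ and $y = z = 0$, so $\tilde{s}$ vanishes. Every component of the representation just constructed vanishes at $(i,1,0,0)$ by direct inspection: the parity-$(1,0,0)$ contributions carry a factor $w(w^2+x^2)$ or $x(w^2+x^2)$ (since $q_j(0,0,0)=0$), and each remaining parity class carries an explicit factor of $y$, of $z$, or of $yz$. To upgrade this to \emph{every} $sos$ representation of $\tilde{s}$, I would show that the Gram matrix of $\tilde{s}$ is itself unique: for each $(\alpha,\beta) \in \Real^2$ with $\alpha^2+\beta^2 = 1$, the substitution $(w,x) \mapsto (\alpha t, \beta t)$ collapses $\tilde{s}$ to $s_{\eta_0}(t,y,z)$, whose Gram matrix is unique, and letting $(\alpha,\beta)$ vary on the unit circle, together with the $(\mathbb{Z}/2)^4$ parity block decomposition of the Gram matrix of $\tilde{s}$, should pin it down completely. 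Any two $sos$ representations of $\tilde{s}$ then differ only by an orthogonal change of basis of their component cubics, which preserves the common complex zero set. The passage from $n=4$ to arbitrary $n \geq 4$ is routine: adding $x_5^6 + \cdots + x_n^6$ preserves positive definiteness, and any $sos$ representation of the augmented polynomial restricts (upon setting $x_5 = \cdots = x_n = 0$) to one of $\tilde{s}$, so a common complex root in $\mathbb{C}^4\setminus\{0\}$ lifts to one in $\mathbb{C}^n\setminus\{0\}$ by appending zeros. The principal obstacle is the uniqueness of the Gram matrix of $\tilde{s}$; once that is in hand the rest of the argument runs along the same mechanical lines as the quartic case.
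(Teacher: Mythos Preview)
Your high-level construction coincides with the paper's: take the even ternary sextic with a unique Gram matrix (the paper uses $f_{-1}$, a rescaling of $s_{\eta_0}$), substitute $x^2\mapsto w^2+x^2$, and observe the common complex root $(1,i,0,0)$. Positive definiteness and the explicit $sos$ for $\tilde s$ go through as you describe, and the passage to $n\ge 5$ is handled the same way in both.

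The gap is at the step ``the Gram matrix of $\tilde s$ is unique.'' Your proposed mechanism --- restrict along every line $(w,x)=(\alpha t,\beta t)$ and invoke uniqueness of the ternary Gram matrix, then use the $(\mathbb Z/2)^4$ parity blocks --- does not pin the matrix down. Concretely, the two changes
\[
\Delta_1=\mathbf{E}_{1200}^{\otimes 2}-\tfrac12\,\mathbf{E}_{2100}\otimes_s\mathbf{E}_{0300},
\qquad
\Delta_2=\mathbf{E}_{2100}^{\otimes 2}-\tfrac12\,\mathbf{E}_{1200}\otimes_s\mathbf{E}_{3000}
\]
(which by Remark~\ref{theremark} are exactly the changes capable of moving the root $(1,i,0,0)$) restrict to zero under \emph{every} $(w,x)\mapsto(\alpha t,\beta t)$, and each term already sits in a single parity block; the same is true of changes such as $\mathbf{E}_{2010}\otimes_s\mathbf{E}_{0210}-2\,\mathbf{E}_{1110}^{\otimes 2}$. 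So restriction-plus-parity leaves all of these coefficients free, and you have no argument ruling out a Gram matrix with a positive $\Delta_1,\Delta_2$ component.

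The paper does not attempt to prove uniqueness of the four-variable Gram matrix. Instead it symmetrizes over $w\leftrightarrow x$ (verifying that this averaging preserves the property ``$(1,i,0,0)^{\otimes 3}$ not in the null space'') and then analyses a specific $10\times10$ principal submatrix. Entries corresponding to $w=0$ or $x=0$ are frozen by your restriction idea specialized to $(\alpha,\beta)=(1,0),(0,1)$; the remaining freedom --- the $\Delta_1,\Delta_2$ coefficient $\delta$ together with three further changes linked by one linear relation --- is then eliminated by a short chain of $3\times3$ minor conditions, ending in a sign contradiction when $\delta>0$. That minor analysis is the missing ingredient; restriction and parity alone cannot replace it.
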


The additional structure provided by the non-$sos$  $s$ seems to be
the reason Theorem \ref{thm2} comes closer than Theorem \ref{thm1}
to being a  complete result.  As remarked on p.263 of \cite{Rez00}
and in \cite{Har99}, in any dimension every $psd$ \textit{even
symmetric quartic} form is an $sos$.  Further, the replacement of
$x^2$ with $w^2+x^2$ that works in the sextic construction seems to
rely more on the \textit{even} property than it does on symmetry. It
turns out that every $psd$ \textit{even quartic} form in $n=4$ or
fewer variables is a sum of squares. This follows from results of P.
H. Diananda \cite{Dia62}.  Thus constructing a quartic noncoercive
$sos$ for $n=5$ from an even form in $4$ variables in a way
analogous to the sextic case is not possible.  On the other hand the
Horn form \cite{HN63} pp. 334-335 \cite{Dia62} p.25 \cite{Rez00}
p.260 provides a $psd$ \textit{even quartic} form for $n=5$ that is
not an $sos$. See \cite{CL77} pp.394-396.

Between the coercive Theorem \eqref{positiveresult} and the
noncoercive Theorems \ref{thm1} and \ref{thm2}, dimensions $4$ and
$5$ for the former and $3$ for the latter remain obscure.  This
puzzle will be discussed further in Section \ref{game}.

\section{A multilinear setup}\mylabel{setup}

At first let $\mathbf{e}^1,\ldots,\mathbf{e}^n$ and
$\mathbf{e}_1,\ldots,\mathbf{e}_n$ be the standard (contravariant
and covariant) basis vectors for $\mathbb{R}^n$.  The scalar product
of vector and covector is denoted $\mathbf{x}\cdot \mathbf{u}=\sum
x_j u^j$ where $x_1,\ldots, u^1,\ldots$ are the standard coordinates
of $\mathbf{x}$ and $\mathbf{u}$.  The nonnegative integers are
denoted $\mathbb{N}_0$.  For a multi-index $\alpha\in \mathbb{N}_0$
its \textit{order} is $|\alpha|=\alpha +\cdots +\alpha_n$, and
$\alpha!=\alpha_1!\cdots \alpha_n!$  For $\mathbf{x}\in
\mathbb{R}^n$, $\mathbf{x}^\alpha=x_1^{\alpha_1}\cdots
x_n^{\alpha_n}$.

The (contravariant) \textit{tensors} $\mathbf{t}$ of \textit{rank}
$p$ are multilinear ($p$-linear) forms mapping $p$ vectors of
$\mathbb{R}^n$ to $\mathbb{R}$ by $$\mathbf{t}\cdot
\mathbf{x}^1\mathbf{x}^2\cdots \mathbf{x}^p=\sum
x^1_{j_1}x^2_{j_2}\cdots x^p_{j_p} t^{j_1 j_2\cdots j_p}$$ The
\textit{coordinates} of $\mathbf{t}$ are $t^{j_1 \cdots j_p}$ and
are  obtained by $\mathbf{t}\cdot \mathbf{e}^{j_1}\cdots
\mathbf{e}^{j_p}=t^{j_1 \cdots j_p}$.  See \cite{Van70} pp.74-75,
80-81.

Given $p$ (co)vectors $\mathbf{u}_1,\ldots,\mathbf{u}_p$ a tensor
$\mathbf{t}$  of rank $p$ may be defined by the \textit{tensor
product} $$\mathbf{t}=\mathbf{u}_1\otimes\cdots\otimes\mathbf{u}_p$$
which acts multilinearly as
\begin{equation}\mylabel{tensorproductact}
\mathbf{t}\cdot \mathbf{x}^1\mathbf{x}^2\cdots
\mathbf{x}^p=(\mathbf{x}^1\cdot \mathbf{u}_1)(\mathbf{x}^2\cdot
\mathbf{u}_2)\cdots(\mathbf{x}^p\cdot \mathbf{u}_p)
\end{equation}
so that $t^{j_1 \cdots j_p}=u_1^{j_1}\cdots u_p^{j_p}$.

The collection of tensors of rank $p$, $T^p(\mathbb{R}^n)$, forms a
vector space over $\mathbb{R}$ of dimension $n^p$ with standard
basis $$\{\mathbf{e}_{j_1}\otimes\cdots\otimes\mathbf{e}_{j_p} :
1\leq j_\nu \leq n\}$$ Let $\mathfrak{S}_p$ denote the symmetric
group of all permutations of $p$ objects.  For each
$\sigma\in\mathfrak{S}_p$ the map
$$P_\sigma(\mathbf{e}_{j_1}\otimes\cdots\otimes\mathbf{e}_{j_p})=\mathbf{e}_{j_{\sigma(1)}}\otimes\cdots\otimes\mathbf{e}_{j_{\sigma(p)}}$$
defines a permutation of the basis vectors of $T^p(\mathbb{R}^n)$
and thereby induces a (unique) linear isomorphism on
$T^p(\mathbb{R}^n)$ \cite{Yok92} p.43.  If
$P_\sigma(\mathbf{t})=\mathbf{t}$ for all $\sigma\in\mathfrak{S}_p$,
then $\mathbf{t}$ is called a \textit{symmetric tensor}.  The set of
all symmetric tensors of rank $p$, $S^p(\mathbb{R}^n)$, also forms a
vector space over $\mathbb{R}$.  The linear operator $$
Sym=Sym_p=\dfrac{1}{p!}\sum_{\sigma\in\mathfrak{S}_p} P_\sigma$$ is
a projection from $T^p(\mathbb{R}^n)$ \textit{onto}
$S^p(\mathbb{R}^n)$ so that
\begin{equation}\mylabel{spbasis}
\{Sym(\mathbf{e}_{j_1}\otimes\cdots\otimes\mathbf{e}_{j_p}): 1\leq
j_1\leq\cdots \leq j_p \leq n\}
\end{equation}
forms a basis for $S^p(\mathbb{R}^n)$.  Further,
\begin{equation}\mylabel{dimsp}dim(S^p(\mathbb{R}^n))=\left(\begin{array}{c}n+p-1\\p\end{array}\right)\end{equation}
\cite{Yok92} pp.47-48.

Given indices $j_1\leq\cdots\leq j_p$ as in \eqref{spbasis} let
$\alpha_k$ equal the number of indices equal to $k$ for $1\leq k\leq
n$. In this way the multi-indices $\alpha\in \mathbb{N}^n_0$ of
order $p$ are put in one-to-one correspondence with the basis
elements of $S^p(\mathbb{R}^n)$.  Denote
\begin{equation}\mylabel{E}
\mathbf{E}_\alpha=Sym(\mathbf{e}_{j_1}\otimes\cdots\otimes\mathbf{e}_{j_p})
\end{equation}
for each basis element in \eqref{spbasis} where $\alpha$ corresponds
to $j_1\leq\cdots\leq j_p$.  $$\hbox{\textit{The coordinates of} }
\mathbf{E}_\alpha\hbox{ \textit{(as a tensor in}
}T^p(\mathbb{R}^n)\hbox{\textit{)} }E^{k_1\cdots k_p}_\alpha\hbox{
\textit{are either} $0$ \textit{or} $\frac{\alpha!}{p!}$\textit{,
and sum to} $1$.}$$
\begin{example}
(i) For $p=2$,
$\mathbf{E}_{(2,0\ldots,0)}=\mathbf{e}_1\otimes\mathbf{e}_1$ with
$E^{11}_{(2,0\ldots,0)}=1$ the only nonzero coordinate.

\noindent$\mathbf{E}_{(1,1,0\ldots,0)}=\frac{1}{2}(\mathbf{e}_1\otimes\mathbf{e}_2+\mathbf{e}_2\otimes\mathbf{e}_1)$
with $E^{12}_{(1,1,0\ldots,0)}=E^{21}_{(1,1,0\ldots,0)}=\frac{1}{2}$
the only nonzero coordinates.

Thus $\{\mathbf{E}_\alpha : |\alpha|=2\}$ is identified with an
orthogonal basis for the $n\times n$ symmetric matrices under the
Hilbert-Schmidt inner product.

\noindent(ii) For $p=3$,
$\mathbf{E}_{(3,0\ldots,0)}=\mathbf{e}_1\otimes\mathbf{e}_1\otimes\mathbf{e}_1$.

\noindent$\mathbf{E}_{(2,1,0\ldots,0)}=\frac{1}{3}(\mathbf{e}_1\otimes\mathbf{e}_1\otimes\mathbf{e}_2+\mathbf{e}_1\otimes\mathbf{e}_2\otimes\mathbf{e}_1+
\mathbf{e}_2\otimes\mathbf{e}_1\otimes\mathbf{e}_1)$.

\noindent$\mathbf{E}_{(1,1,1,0\ldots,0)}=\frac{1}{6}(\mathbf{e}_1\otimes\mathbf{e}_2\otimes\mathbf{e}_3+\mathbf{e}_3\otimes\mathbf{e}_1\otimes\mathbf{e}_2+
\mathbf{e}_2\otimes\mathbf{e}_3\otimes\mathbf{e}_1+\mathbf{e}_1\otimes\mathbf{e}_3\otimes\mathbf{e}_2+\mathbf{e}_2\otimes\mathbf{e}_1\otimes\mathbf{e}_3+
\mathbf{e}_3\otimes\mathbf{e}_2\otimes\mathbf{e}_1)$.
\end{example}

For a vector $\mathbf{x}\in\mathbb{R}^n$ (or $\mathbb{C}^n$) each
basis element $\mathbf{E}_\alpha \in S^p(\mathbb{R}^n)$ therefore
acts multilinearly on $\mathbf{x}$ as
\begin{equation}\mylabel{actsmulti}
\mathbf{E}_\alpha\cdot\mathbf{x}\mathbf{x}\cdots\mathbf{x}=\mathbf{x}^\alpha
\end{equation}
Therefore

\vskip.25in  \noindent \textit{The vector space} $S^p(\mathbb{R}^n)$
\textit{is  isomorphic to the vector space of homogeneous
polynomials of degree} $p$ \textit{from}
$\mathbb{R}[x_1,\ldots,x_n]$.

\vskip.25in See, for example, Theorem 2.5 p.67 of \cite{Yok92}.

In the same way the vector space  of (covariant)  tensors
$T_p(\mathbb{R}^n)$ dual to $T^p(\mathbb{R}^n)$ (\cite{Yok92},
pp.53-54) is formed.  Putting
$\mathbf{s}=\mathbf{x}^1\otimes\cdots\otimes\mathbf{x}^p\in
T_p(\mathbb{R}^n)$, \eqref{tensorproductact} can be rewritten as the
dual pairing
\begin{equation}\mylabel{dualpairing}
\mathbf{t}\cdot \mathbf{s}=(\mathbf{x}^1\cdot
\mathbf{u}_1)(\mathbf{x}^2\cdot
\mathbf{u}_2)\cdots(\mathbf{x}^p\cdot \mathbf{u}_p)
\end{equation}
A basis for the (covariant) symmetric tensors $S_p(\mathbb{R}^n)$ is
defined similarly to \eqref{spbasis}, and basis elements
$\mathbf{E}^\alpha$, $|\alpha|=p$, are defined as in \eqref{E}.  By
the normalizations
\begin{equation}\mylabel{normalbasis}
\mathbf{N}_\alpha=\sqrt{\dfrac{p!}{\alpha!}}\mathbf{E}_\alpha\mbox{
and }\mathbf{N}^\alpha=\sqrt{\dfrac{p!}{\alpha!}}\mathbf{E}^\alpha
\end{equation}
one obtains dual bases
\begin{equation}\mylabel{ndotn}
\mathbf{N}_\alpha\cdot\mathbf{N}^\beta=\delta_\alpha^\beta
\end{equation}
where the Dirac delta is equal to $0$ when $\alpha\neq\beta$ and $1$
otherwise.

Because these dual symmetric  spaces are isomorphic,  no longer will
any distinction  be made between them.  Instead $S^p(\mathbb{R}^n)$
will be considered an inner product space with inner product formed
as in \eqref{dualpairing}.  Bases will be written
$\{\mathbf{E}_\alpha:|\alpha|=p\}$,
$\{\mathbf{N}_\alpha:|\alpha|=p\}$  an orthogonal and an orthonormal
basis respectively.  Vectors  of $\mathbb{R}^n$ will be enumerated
$\mathbf{x}^1,\mathbf{x}^2,\ldots,\mathbf{u}^1,\ldots$ with
subscripts indicating coordinates $\mathbf{x}=(x_1,x_2,\ldots)$,
$\mathbf{x}^1=(x_1^1,x_2^1,\ldots),\ldots$

A convenient notation for the tensor product of $p$
\textit{identical} vectors is
\begin{equation}\mylabel{xtothep}
\mathbf{x}^{\otimes p}=\mathbf{x}\otimes\cdots\otimes\mathbf{x}\in
S^p(\mathbb{R}^n)
\end{equation}
When $\mathbf{x}\neq \mathbf{0}$ the tensor $\mathbf{x}^{\otimes p}$
will be referred to as  a \textit{rank-one tensor} even though it is
an element of $S^p(\mathbb{R}^n)$.  For example, when $p=2$ all
$n\times n$ symmetric matrices that have rank $1$ are given by
$\mathbf{x}^{\otimes 2}=\mathbf{x}\otimes\mathbf{x}$.  Now
\eqref{actsmulti} becomes
$$\mathbf{E}_\alpha\cdot\mathbf{x}^{\otimes p}=\mathbf{x}^\alpha\mbox{,  $|\alpha|=p$.}$$

Since $S^p(\mathbb{R}^n)$ is a real vector space, the foregoing can
be done with it in place of $\mathbb{R}^n$.  Of particular interest
is the space $S^2(S^p(\mathbb{R}^n))$ isomorphic to the space of
$\left(\begin{array}{c}n+p-1\\p\end{array}\right)\times
\left(\begin{array}{c}n+p-1\\p\end{array}\right)$ real symmetric
matrices.  These matrices will be referred to below as the
\textit{representation matrices}.

Given any $\mathbf{t}\in  S^p(\mathbb{R}^n)$ the notation of
\eqref{xtothep} will be applied as $\mathbf{t}^{\otimes
2}=\mathbf{t}\otimes\mathbf{t}\in S^2(S^p(\mathbb{R}^n))$.  Given
also $\mathbf{s}$, we introduce the notation
$$\mathbf{s}\otimes_s\mathbf{t}=\mathbf{s}\otimes\mathbf{t}+\mathbf{t}\otimes\mathbf{s}$$
noting that
$$\mathbf{t}\otimes_s\mathbf{t}=2 \mathbf{t}\otimes\mathbf{t}$$
and $$(\mathbf{s}+\mathbf{t})^{\otimes 2}=\mathbf{s}^{\otimes
2}+\mathbf{s}\otimes_s\mathbf{t}+\mathbf{t}^{\otimes 2}$$

A basis for the vector space $S^2(S^p(\mathbb{R}^n))$ is
 \begin{equation}\mylabel{basiss2}
\{\mathbf{E}_\alpha\otimes_s\mathbf{E}_\beta: |\alpha|=|\beta|=p\}
 \end{equation}
 It contains
 $\left(\begin{array}{c}\left(\begin{array}{c}n+p-1\\p\end{array}\right)+1\\\\2\end{array}\right)$
 elements.
More general elements of $S^2(S^p(\mathbb{R}^n))$ will be denoted in
script as with
 $\mathcal{S}$ or $\mathcal{G}$.  All act as symmetric bilinear
 (quadratic) forms on $S^p(\mathbb{R}^n)$
 $$\mathcal{S}\cdot\mathbf{s}\mathbf{t}=\mathcal{S}\cdot\mathbf{t}\mathbf{s}$$
 For example
 $$\dfrac{p!p!}{\alpha!\beta!}\mathbf{E}_\alpha\otimes_s\mathbf{E}_\beta\cdot
 \mathbf{E}_\psi\mathbf{E}_\omega=\delta_\alpha^\psi\delta_\beta^\omega+\delta_\alpha^\omega\delta_\beta^\psi=0\mbox{, $1$ or $2$}$$
 and in particular
 \begin{equation}\mylabel{xa+b}
\dfrac{1}{2}\mathbf{E}_\alpha\otimes_s\mathbf{E}_\beta\cdot\mathbf{x}^{\otimes
p}\mathbf{x}^{\otimes p}=\mathbf{x}^{\alpha+\beta}
 \end{equation}

 By choosing a linear ordering for the multi-indices of order $p$, an
 isomorphism of $S^2(S^p(\mathbb{R}^n))$ and the $\left(\begin{array}{c}n+p-1\\p\end{array}\right)\times
\left(\begin{array}{c}n+p-1\\p\end{array}\right)$ symmetric matrices
can be made explicit.  Given \eqref{xa+b} the one that is apparently
most computationally convenient is induced by the mapping
\begin{equation}\mylabel{isomorphism}
\mathbf{E}_\alpha\otimes_s\mathbf{E}_\beta\mapsto\left(\delta_\alpha^\psi\delta_\beta^\omega+\delta_\alpha^\omega\delta_\beta^\psi\right)_{|\psi|=|\omega|=p}
\end{equation}
In this way an element of $S^2(S^p(\mathbb{R}^n))$ is assigned a
\textit{representation matrix} and \textit{vice versa}.  For
example, with linear order $\alpha\prec\beta\prec\cdots$, the tensor
$(a_\alpha\mathbf{E}_\alpha+a_\beta\mathbf{E}_\beta+\cdots)^{\otimes
2}=a_\alpha^2\mathbf{E}_\alpha^{\otimes 2}+a_\alpha
a_\beta\mathbf{E}_\alpha\otimes_s \mathbf{E}_\beta +\cdots$ is
assigned the matrix $\left(\begin{array}{llc}a_\alpha^2&a_\alpha
a_\beta&\cdots\\a_\alpha a_\beta&a_\beta^2&\cdots
\\\vdots& \vdots & \end{array}\right)$, and by \eqref{xa+b}
represents the form $a_\alpha^2\mathbf{x}^{2\alpha}+2 a_\alpha
a_\beta\mathbf{x}^{\alpha+\beta}
+\cdots=(a_\alpha\mathbf{x}^\alpha+a_\beta\mathbf{x}^\beta+\cdots)^{
2}$.\vskip.1in

\textit{A tensor of} $S^2(S^p(\mathbb{R}^n))$ \textit{and its
representation matrix will be denoted by the same symbol.}\vskip.1in

In addition \eqref{xa+b} shows that\vskip.1in

\noindent \textit{Every element of} $S^2(S^p(\mathbb{R}^n))$
represents \textit{ a homogeneous polynomial in}
$\mathbb{R}[x_1,\ldots,x_n]$ \textit{of degree} $2p$, \textit{and
every such homogeneous polynomial} can be represented \textit{by an
element of} $S^2(S^p(\mathbb{R}^n))$.\vskip.1in

Such representations are not unique.  $S^2(S^p(\mathbb{R}^n))$ is
not isomorphic to $S^{2p}(\mathbb{R}^n)$.  The respective dimensions
are related by
\begin{equation}\mylabel{numbers}
\left(\begin{array}{c}\left(\begin{array}{c}n+p-1\\p\end{array}\right)+1\\\\2\end{array}\right)>\left(\begin{array}{c}n+2p-1\\2p\end{array}\right)
\end{equation}

The following can be found on p.109 of \cite{CLR95}.\vskip.1in

\noindent\textit{The subspace}
\begin{equation}\mylabel{deltas}
A^{2,p,n}=\{\Delta\in S^2(S^p(\mathbb{R}^n)):
\Delta\cdot\mathbf{x}^{\otimes p}\mathbf{x}^{\otimes p}=0 \mbox{ for
every }\mathbf{x}\in \mathbb{R}^n\}
\end{equation}
\textit{has as its dimension the difference of the two numbers in}
\eqref{numbers}.\vskip.1in

To see this, the basis \eqref{basiss2} for $S^2(S^p(\mathbb{R}^n))$
can be partitioned into classes
$$\{\mathbf{E}_\alpha\otimes_s\mathbf{E}_\beta:
\alpha+\beta=\gamma\}$$ for each $|\gamma|=2p$, with the number of
classes equal to $dim(S^{2p}(\mathbb{R}^n))$.  Beginning with a
distinguished member of a class, the same span is obtained by the
collection
\begin{equation}\mylabel{samespan}
\{\mathbf{E}_\alpha\otimes_s\mathbf{E}_\beta,\mathbf{E}_\alpha\otimes_s\mathbf{E}_\beta-\mathbf{E}_{\alpha'}\otimes_s\mathbf{E}_{\beta'},
\mathbf{E}_\alpha\otimes_s\mathbf{E}_\beta-\mathbf{E}_{\alpha''}\otimes_s\mathbf{E}_{\beta''},\ldots\}
\end{equation}
where $\alpha+\beta=\alpha'+\beta'=\cdots=\gamma$.  Every element
after the first is in the subspace $A^{2,p,n}$.

By the definition of $A^{2,p,n}$,\vskip.1in

\noindent\textit{Two representation matrices for the same
homogeneous polynomial of degree $2p$ always differ by a member of}
$A^{2,p,n}$.\vskip.1in

The members of the subspace $A^{2,p,n}$ when added to a
representation matrix for a polynomial \textit{change the
representation} of the polynomial and \textit{do not change the
polynomial}.  When a polynomial has an $sos$ representation, adding
what will be called a \textit{change} $\Delta$ to that
representation might or might not yield another $sos$
representation. In the case it does yield another, it cannot alter
the facts that the polynomials of degree $p$ that are squared share
or do not share a common \textit{real} root. That they share or do
not share a common complex root from $\mathbb{C}^n\setminus
\mathbb{C}\mathbb{R}^n$, however, possibly can be altered by adding
a $\Delta$.  Here $\mathbb{C}\mathbb{R}^n=\{a\mathbf{x}:
a\in\mathbb{C} \mbox{ and } \mathbf{x}\in\mathbb{R}^n\}$.

\begin{example}
$\Delta=\mathbf{E}_{(2,0)}\otimes_s\mathbf{E}_{(0,2)}-2\mathbf{E}_{(1,1)}^{\otimes
2}$ may be allowed to serve as the only basis element for
$A^{2,2,2}$. Letting $a\in\mathbb{R}$
\begin{equation}\mylabel{S_a}
\mathcal{S}_a:=(\mathbf{E}_{(2,0)}+\mathbf{E}_{(0,2)})^{\otimes
2}+a\Delta=\mathbf{E}_{(2,0)}+(1+a)\mathbf{E}_{(2,0)}\otimes_s\mathbf{E}_{(0,2)}+\mathbf{E}_{(0,2)}-2a\mathbf{E}_{(1,1)}^{\otimes
2}
\end{equation}
when applied to $\mathbf{x}^{\otimes 2}\mathbf{x}^{\otimes 2}$
always yield the $pd$ polynomial $(x_1^2+x_2^2)^2$.  Choosing a
linear order  $(2,0)\prec(0,2)\prec(1,1)$  for the basis elements of
$S^2(\mathbb{R}^2)$, the isomorphism \eqref{isomorphism}, of
$S^2(S^2(\mathbb{R}^2))$ with the symmetric $3\times 3$ matrices,
yields
$$\mathcal{S}_a=\left(\begin{array}{llc}1&1+a&0\\1+a&1&0\\0&0&-2a\end{array}\right)$$

The eigenvalues are $-a$, $2+a$ and $-2a$.  Using these together
with the corresponding unit eigenvectors suggests that \eqref{S_a}
be written $$
\mathcal{S}_a=-\dfrac{a}{2}(\mathbf{E}_{(2,0)}-\mathbf{E}_{(0,2)})^{\otimes
2}+\dfrac{2+a}{2}(\mathbf{E}_{(2,0)}+\mathbf{E}_{(0,2)})^{\otimes
2}-2a\mathbf{E}_{(1,1)}^{\otimes 2}$$ The representation matrix is
$psd$ if and only if $-2\leq a\leq0$ if and only if $$
\mathcal{S}_a\cdot\mathbf{x}^{\otimes 2}\mathbf{x}^{\otimes
2}=-\dfrac{a}{2}(x_1^2-x_2^2)^{ 2}+\dfrac{2+a}{2}(x^2_1+x_2^2)^{
2}-2a(x_1x_2)^{ 2}$$ is an $sos$ representaion.  Among these, each
quadratic term has the complex root $\mathbf{x}=(1,i)$ when $a=0$,
while there are no common complex roots when $-2\leq a < 0$.
\end{example}

This example  used the fact that a real symmetric $m\times m$ matrix
may be written as an element of $S^2(\mathbb{R}^m)$
\begin{equation}\mylabel{outerproduct}\sum^m_{j=1}\lambda_j \mathbf{u}^j\otimes \mathbf{u}^j\end{equation} where
the $\lambda_j$ are eigenvalues counted by multiplicity and
$\mathbf{u}^j\in \mathbb{R}^m$ are the corresponding \textit{unit }
eigenvectors.

The following proposition can be found in \cite{CLR95} p.106,
Proposition 2.3.  We include a proof in the multilinear language
used here.

\begin{prop}\mylabel{characterization}
A form $f\in\mathbb{R}[x_1,\ldots,x_n]$ of degree $2p$ is an $sos$
\emph{if and only if} there is a $psd$ representation matrix
$\mathcal{G}$ such that $f(x)=\mathcal{G}\cdot \mathbf{x}^{\otimes
p}\mathbf{x}^{\otimes p}$.
\end{prop}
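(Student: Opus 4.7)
The plan is to pass between forms of degree $p$ and symmetric tensors in $S^p(\mathbb{R}^n)$ using the isomorphism stated after \eqref{actsmulti}, and to exploit the identity
$$(\mathbf{t}\otimes\mathbf{t})\cdot\mathbf{x}^{\otimes p}\mathbf{x}^{\otimes p}=(\mathbf{t}\cdot\mathbf{x}^{\otimes p})^2,\qquad \mathbf{t}\in S^p(\mathbb{R}^n),$$
which is just the dual pairing \eqref{dualpairing}, specialized to $S^2(S^p(\mathbb{R}^n))$, evaluated on the rank-one element $\mathbf{x}^{\otimes p}\in S^p(\mathbb{R}^n)$. Once this identity is in hand, both directions reduce to the classical fact that a real symmetric matrix is $psd$ iff it is a sum of rank-one squares.

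For the ``only if'' direction, suppose $f=\sum_{j=1}^r p_j^2$ with each $p_j$ a form of degree $p$. The tensor-polynomial isomorphism produces a unique $\mathbf{t}_j\in S^p(\mathbb{R}^n)$ with $p_j(\mathbf{x})=\mathbf{t}_j\cdot\mathbf{x}^{\otimes p}$. Set $\mathcal{G}=\sum_{j=1}^r \mathbf{t}_j^{\otimes 2}\in S^2(S^p(\mathbb{R}^n))$. Applying the displayed identity termwise yields
$$\mathcal{G}\cdot\mathbf{x}^{\otimes p}\mathbf{x}^{\otimes p}=\sum_{j=1}^r(\mathbf{t}_j\cdot\mathbf{x}^{\otimes p})^2=\sum_{j=1}^r p_j(\mathbf{x})^2=f(\mathbf{x}),$$
so $\mathcal{G}$ is a representation matrix for $f$; and $\mathcal{G}$ is $psd$ because, for any $\mathbf{s}\in S^p(\mathbb{R}^n)$, $\mathcal{G}\cdot\mathbf{s}\mathbf{s}=\sum_j(\mathbf{t}_j\cdot\mathbf{s})^2\geq 0$. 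In coordinates with respect to the orthonormal basis $\{\mathbf{N}_\alpha\}$ of \eqref{normalbasis}, $\mathcal{G}$ is simply the sum of outer products $\mathbf{t}_j\mathbf{t}_j^T$.

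For the converse, suppose $\mathcal{G}$ is a $psd$ representation matrix for $f$. Apply the spectral decomposition \eqref{outerproduct} on the Euclidean space $S^p(\mathbb{R}^n)$, whose dimension $m$ is given by \eqref{dimsp}: there exist nonnegative eigenvalues $\lambda_1,\ldots,\lambda_m$ and orthonormal eigenvectors $\mathbf{u}^1,\ldots,\mathbf{u}^m\in S^p(\mathbb{R}^n)$ with
$$\mathcal{G}=\sum_{j=1}^m \lambda_j\,\mathbf{u}^j\otimes\mathbf{u}^j=\sum_{j=1}^m\bigl(\sqrt{\lambda_j}\,\mathbf{u}^j\bigr)^{\otimes 2}.$$
Each $\sqrt{\lambda_j}\,\mathbf{u}^j\in S^p(\mathbb{R}^n)$ corresponds under the isomorphism to a form $p_j$ of degree $p$, and the same identity gives $f(\mathbf{x})=\mathcal{G}\cdot\mathbf{x}^{\otimes p}\mathbf{x}^{\otimes p}=\sum_{j=1}^m p_j(\mathbf{x})^2$, exhibiting $f$ as an $sos$. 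I do not anticipate any real obstacle; the only bookkeeping item is the identity $(\mathbf{t}\otimes\mathbf{t})\cdot\mathbf{x}^{\otimes p}\mathbf{x}^{\otimes p}=(\mathbf{t}\cdot\mathbf{x}^{\otimes p})^2$ cited at the outset, which follows by writing $\mathbf{t}=\sum_\alpha c_\alpha\mathbf{E}_\alpha$, expanding $\mathbf{t}^{\otimes 2}$, and applying \eqref{xa+b} termwise.
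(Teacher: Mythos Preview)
Your proof is correct and follows essentially the same route as the paper's own argument: one direction builds $\mathcal{G}$ as a sum of rank-one tensors $\mathbf{t}_j^{\otimes 2}$ associated to the $p_j$, and the other direction applies the spectral decomposition \eqref{outerproduct} to a given $psd$ $\mathcal{G}$ to extract the squared forms. The only cosmetic difference is that you isolate and justify the identity $(\mathbf{t}\otimes\mathbf{t})\cdot\mathbf{x}^{\otimes p}\mathbf{x}^{\otimes p}=(\mathbf{t}\cdot\mathbf{x}^{\otimes p})^2$ explicitly, whereas the paper uses it tacitly.
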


\begin{proof}
When $\mathcal{G}$ is $psd$ and  a representation matrix for $f$,
then $\mathcal{G}$ can be written as a matrix
$\sum_{|\beta|=p}\lambda_\beta\mathbf{u}^\beta\otimes
\mathbf{u}^\beta$ where the $\mathbf{u}^\beta$ are the unit
eigenvectors with $\left(\begin{array}{c}n+p-1\\p\end{array}\right)$
real components $u^\beta_\alpha$ for $|\alpha|=p$ and $\lambda_\beta
\geq 0$ are the corresponding eigenvalues. By the isomorphism
\eqref{isomorphism} it is a  tensor
$\mathcal{G}=\sum_{|\beta|=p}\lambda_\beta(\sum_{|\alpha|=p}u^\beta_\alpha
\mathbf{E}_\alpha)^{\otimes 2}$ that acts as  $\mathcal{G}\cdot
\mathbf{x}^{\otimes p}\mathbf{x}^{\otimes
p}=\sum_{|\beta|=p}\lambda_\beta(\sum_{|\alpha|=p}u^\beta_\alpha
\mathbf{x}^\alpha)^{ 2}$.  Thus $f$ is $sos$.

If $f$ is $sos$, then it is a sum of forms
$$\left(\sum_{|\alpha|=p}a_\alpha \mathbf{x}^\alpha\right)^{
2}=\left(\sum_{|\alpha|=p}a_\alpha \mathbf{E}_\alpha\cdot
\mathbf{x}^{\otimes p}\right)^{ 2}=\left(\sum_{|\alpha|=p}a_\alpha
\mathbf{E}_\alpha\right)^{\otimes 2}\cdot \mathbf{x}^{\otimes
p}\mathbf{x}^{\otimes p}$$ $a_\alpha\in \mathbb{R}$.  $\mathcal{G}$
can be taken to be a sum of tensors $\left(\sum_{|\alpha|=p}a_\alpha
\mathbf{E}_\alpha\right)^{\otimes 2}$ each with a $psd$
representation matrix.
\end{proof}

\textit{A} psd \textit{representation matrix} $\mathcal{G}\in
S^2(S^p(\mathbb{R}^n))$ \textit{is also called a} Gram matrix.
\textit{For a form} $f$ \textit{of degree} $2p$ \textit{to be an}
$sos$ \textit{it is necessary and sufficient that it have a
representation} $f(x)=\mathcal{G}\cdot \mathbf{x}^{\otimes
p}\mathbf{x}^{\otimes p}$ \textit{for} some \textit{Gram matrix}
$\mathcal{G}$.\vskip.1in

An element of $S^2(S^p(\mathbb{R}^n))$ may also be viewed as a
linear transformation $\mathbf{t}\mapsto\mathcal{S}\mathbf{t}$ on
$S^p(\mathbb{R}^n)$ so that
$\mathcal{S}\cdot\mathbf{s}\mathbf{t}=\mathbf{s}\cdot\mathcal{S}\mathbf{t}$.

Two more elementary but useful observations follow from the
characterization of sums of squares given by Proposition
\ref{characterization} and elementary properties of $psd$ matrices.

\vskip.1in \textit{Suppose} $\mathcal{G}$ \textit{is a Gram matrix.
Then the form} $\mathcal{G}\cdot \mathbf{x}^{\otimes
p}\mathbf{x}^{\otimes p}$ \textit{is positive definite} if and only
if \textit{the tensor} $(\mathcal{G}+\Delta) \mathbf{x}^{\otimes
p}\neq \mathbf{0}$ \textit{for all nonzero}
$\mathbf{x}\in\mathbb{R}^n$ \textit{and for all changes}
$\Delta$.\vskip.1in

For $\mathbf{x}, \mathbf{y}\in \mathbb{R}^n$ put
$\mathbf{z}=\mathbf{x}+i\mathbf{y}\in \mathbb{C}^n$.  Then formally
using the binomial expansion
$$\mathbf{z}^{\otimes p}=\sum_{m=0}^p
\left(\begin{array}{c}p\\m\end{array}\right)i^m
Sym(\mathbf{x}^{\otimes (p-m)}\otimes \mathbf{y}^{\otimes
m})=$$$$\mathbf{x}^{\otimes
p}-\left(\begin{array}{c}p\\2\end{array}\right)Sym(\mathbf{x}^{\otimes
(p-2)}\otimes \mathbf{y}\otimes \mathbf{y})+\cdots+i(p\;
Sym(\mathbf{x}^{\otimes (p-1)}\otimes \mathbf{y})-\cdots)$$$$:=Re\;
\mathbf{z}^{\otimes p}+i Im \;\mathbf{z}^{\otimes p}$$  A linear
transformation on $S^p(\mathbb{ R}^n)$ is extended to complex valued
tensors by $\mathcal{S}(\mathbf{s}+i\mathbf{t})=
\mathcal{S}\mathbf{s}+i\mathcal{S}\mathbf{t}$. It follows that
$\Delta\cdot \mathbf{z}^{\otimes p}\mathbf{z}^{\otimes p}=0$ for all
changes $\Delta$.  This is because the coefficients on the powers of
the  \textit{real} variable $t$ in $\Delta\cdot
(\mathbf{x+\emph{t}y})^{\otimes p}(\mathbf{x+\emph{t}y})^{\otimes
p}=0$ must all vanish. The same coefficients occur on the unreduced
powers of $i$ in $\Delta\cdot \mathbf{z}^{\otimes
p}\mathbf{z}^{\otimes p}$.  Or one can invoke the multi-index
formalism.  Similarly, by comparing coefficients between binomial
expansions, \eqref{actsmulti} extends to complex rank-one tensors
$$\mathbf{E}_\alpha\cdot \mathbf{z}^{\otimes p}=\mathbf{z}^\alpha$$

\begin{multline}\mylabel{strategy}
\mbox{\textit{Let} $\mathcal{S}$ \textit{be a representation matrix.
Then} $\mathcal{S}\cdot \mathbf{x}^{\otimes p}\mathbf{x}^{\otimes
p}$ \textit{is a \emph{coercive}} $sos$ if and only if}
\\\mbox{\textit{there} \textit{exists a} $\Delta$ \textit{such that}
$\mathcal{S}+\Delta$ \textit{is a Gram matrix, and for every
nonzero} $z\in \mathbb{C}^n$ \textit{the}\hskip.2in
}\\\mbox{\textit{tensor} $(\mathcal{S}+\Delta) \mathbf{z}^{\otimes
p}\neq \mathbf{0}$.\hskip3.75in}
\end{multline}
For when $\mathcal{S}+\Delta$ is a Gram matrix it may be written
$\sum \mathbf{g}_j\otimes\mathbf{g}_j$ with the collection of
$\mathbf{g}_j\in S^p(\mathbb{R}^n)$ linearly independent; and
$(\mathcal{S}+\Delta) \mathbf{z}^{\otimes p}=\sum
(\mathbf{g}_j\cdot\mathbf{z}^{\otimes p})\mathbf{g}_j$.\vskip.1in

The strategy, then, for showing that a positive definite $sos$ is a
coercive $sos$ is to change the Gram matrix, preserving its $psd$
property, in order to eliminate from the null space all
2-dimensional subspaces of the form $span\{\mathbf{s},\mathbf{t}\}$
where $\mathbf{s}+i\mathbf{t}=\mathbf{z}^{\otimes p}$ for nonzero
$\mathbf{z}\in \mathbb{C}$.  In this way the point of view of this
article is opposite that of some literature growing out of Hilbert's
theorems on sums of squares.  For example, the coercive result
\eqref{positiveresult} is achieved by eliminating the nontrivial
null space altogether, i.e. showing that $pd$ Gram matrices exist
for those cases.  On the other hand, the most remarkable and
difficult result of Hilbert's is that for the cone $P_{3,4}$, where
the rank of a Gram matrix can be as large as $6$, every polynomial
can be written a  sum of just $3$ squares.  Out of this came the
general idea of the \textit{length} or minimum number of squares
required for an $sos$ representation and out of this the
\textit{Pythagoras number}, the minimum number of squares needed
over a collection of $sos$ polynomials.  See, for example,
\cite{BCR98}, \cite{CLR95}, \cite{Pfi95}, \cite{PD01} and others.

For coerciveness the length of an $sos$ is often an undesirable
number, and one naturally wishes to \textit{maximize} the number of
independent squares in a representation.  That this is an
interesting problem is shown here by demonstrating,  in the case of
a \textit{positive definite} polynomial with $psd$ representation
(Gram) matrix, that the rank of its Gram matrices cannot  in general
be increased enough to achieve the desired end, vis. coerciveness.

We end this section by restating question \eqref{question} in
multilinear language and by outlining the construction by which the
answer is shown to be \textit{no} in general.\vskip.1in

\textit{Suppose} $\mathcal{G}\in S^2(S^p(\mathbb{R}^n))$ \textit{is
a Gram matrix}
 \textit{and} $\mathcal{G}\mathbf{x}^{\otimes p}\neq \mathbf{0}$
\textit{for all rank-one tensors} .  \textit{Does there exist a
change} $\Delta$ \textit{such that} $\mathcal{G}+\Delta$ \textit{is}
 \textit{a Gram matrix and} $(\mathcal{G}+\Delta)\mathbf{z}^{\otimes p}\neq
\mathbf{0}$ \textit{for all nonzero} $z\in \mathbb{C}$?\vskip.1in

Or less precisely, can a Gram matrix $\mathcal{G}$ that is $pd$ on
the rank-one tensors be \textit{changed} to be a Gram matrix that is
$pd$ on all subspaces of the form $span\{\mathbf{s},\mathbf{t}\}$
where $\mathbf{s}+i\mathbf{t}=\mathbf{z}^{\otimes p}$ for some
nonzero $\mathbf{z}\in \mathbb{C}$?

The question is answered below in the negative, for the cases
$n\geq6$, $p= 2$ and $n\geq4$, $p= 3$, by the construction
\begin{multline}\mylabel{construction}
\mbox{Construct a Gram matrix $\mathcal{G}$ such that}\\\mbox{(i)
$\mathcal{G}$ is positive definite on the rank-one tensors.
}\\\mbox{\;\;\;\;\;\;\;\;\;\;\;\;\;\;\;\;\;\;\;\;\;\;\;\;\;\;\;\;\;\;\;\;\;\;\;(ii)
there exists a nonzero $\mathbf{z}\in \mathbb{C}^n$ such that the
tensor $\mathcal{G}\mathbf{z}^{\otimes p}=\mathbf{0}$.}\\\mbox{(iii)
$\mathcal{G}+\Delta$ is never a Gram matrix whenever
$\Delta\mathbf{z}^{\otimes p}\neq \mathbf{0}$.}
\end{multline}

A uniqueness condition stronger than (iii) is $$ \mbox{(iii)$'$
$\mathcal{G}+\Delta$ is never a Gram matrix whenever $\Delta\neq
\mathbf{0}$.}$$

\section{A positive definite quartic with a unique Gram
matrix}\mylabel{quartic}

In this section an element of $\Sigma_{4,4}$ is constructed that
satifies (i) and (iii)$'$ of the construction \eqref{construction},
but not (ii).

The vector space of representation matrices $S^2(S^p(\mathbb{R}^n))$
inherits a topology from the Euclidean space of the same dimension.
The closed cone of Gram matrices will have as its interior the cone
of positive definite Gram matrices.  The boundary of this cone is
the set of Gram matrices with rank less than
$\left(\begin{array}{c}n+p-1\\p\end{array}\right)$.

Part (ii) of the construction \eqref{construction} cannot be
realized if $\mathcal{G}$ is taken in the interior of the cone. Thus
$\mathcal{G}$ must be on the boundary if one hopes to realize (ii)
and one is led to consider $pd$ polynomials of degree $2p$ that
border those that are not sums of squares.  Historically $pd$ and
$psd$ polynomials that are not $sos$ are difficult to locate.  It is
therefore sensible to begin with a known $pd$ polynomial that is not
$sos$, i.e. does not have a Gram matrix but is definite on the
rank-one tensors, and perturb it in such a way so that one arrives
at the boundary of the Gram matrices while maintaining the rank-one
definiteness. Here we take $n=4, p=2$, let $\mathbf{x}\in
\mathbb{R}^4$ correspond to $(w,x,y,z)$ and begin with the Choi-Lam
quartics $q_\eta$ \eqref{q}, \eqref{qeta}, letting $\eta$ increase
until the quartic \eqref{sosquartic} is achieved.

Except for the uniqueness of representation claim, all other claims
made for \eqref{sosquartic} in Section \ref{Introduction} can be
quickly proved.\vskip.1in

\noindent 1. By expanding the right side of \eqref{sosquartic} and
collecting terms the right side meets the definition of $q_{\eta_0}$
\eqref{qeta} if the coefficients on the $x^2y^2, y^2z^2$ and $
z^2x^2$ terms equal $1$.  This occurs when

\noindent 2. $\sqrt{\eta_o}$ is a root of
$X^3-\frac{1}{2}X+\frac{1}{9}=0$.

\noindent 3. $\sqrt{\eta_o}$ must be chosen to be the smallest
positive root, else $\eta_0$ would not be the smallest $\eta$ that
makes $q_\eta$ an $sos$.\vskip.1in

Since  degree and dimension are low in this section, tensors
$\mathbf{E}_\alpha$ will be denoted by using only the entries of
each multi-index as subscripts, as in $\mathbf{E}_{ijkl}$ instead of
$\mathbf{E}_{(i,j,k,l)}$.  Thus
$\mathbf{E}_{2000}\cdot\mathbf{x}^{\otimes 2}=x_1^2=w^2$,
etc.\vskip.1in

\noindent 4. That $\eta_0$, as described in Claims 2 and 3, is the
\textit{smallest} $\eta$ for which $q_\eta$ is an $sos$ will follow
once it is proved that
\begin{multline}\mylabel{Qeta0}
\mathcal{Q}_{\eta_0}=\left(\mathbf{E}_{2000}-\sqrt{\eta_0}(\mathbf{E}_{0200}+\mathbf{E}_{0020}+\mathbf{E}_{0002})\right)^{\otimes
2}+\\\dfrac{2}{9\sqrt{\eta_0}}\left[(3\sqrt{\eta_0}\mathbf{E}_{1100}-\mathbf{E}_{0011})^{\otimes
2}+(3\sqrt{\eta_0}\mathbf{E}_{1010}-\mathbf{E}_{0101})^{\otimes
2}+(3\sqrt{\eta_0}\mathbf{E}_{1001}-\mathbf{E}_{0110})^{\otimes
2}\right]
\end{multline}
is the \textit{unique} Gram matrix $\mathcal{G}$ for which
$q_{\eta_0}(\mathbf{x})=\mathcal{G}\cdot\mathbf{x}^{\otimes
2}\mathbf{x}^{\otimes 2}$.  For if $q_\eta$ were an $sos$ for some
$\eta<\eta_0$, then
\begin{equation}\mylabel{2different}q_{\eta_0}=q_\eta+(\eta_0-\eta)(x^4+y^4+z^4)=q_\eta+(\eta_0-\eta)((x^2-y^2)^2+(\sqrt{2}xy)^2+z^4)\end{equation}
and the polynomial identity presents two different Gram matrices for
$q_{\eta_0}$.  Letting $\mathcal{Q}_{\eta}$ be, by Proposition
\ref{characterization}, a Gram matrix for $q_\eta$, $q_{\eta_0}$ now
has both
\begin{equation}\mylabel{identical1}
\mathcal{Q}_\eta+(\eta_0-\eta)(\mathbf{E}_{0200}^{\otimes
2}+\mathbf{E}_{0020}^{\otimes 2}+\mathbf{E}_{0002}^{\otimes 2})
\end{equation}
and
\begin{equation}\mylabel{identical2}
\mathcal{Q}_\eta+(\eta_0-\eta)((\mathbf{E}_{0200}-\mathbf{E}_{0020})^{\otimes
2}+2\mathbf{E}_{0110}^{\otimes 2}+\mathbf{E}_{0002}^{\otimes 2})
\end{equation}
as Gram matrices.  They differ by
$\Delta=(\eta_0-\eta)(2\mathbf{E}_{0110}^{\otimes
2}-\mathbf{E}_{0200}\otimes_s\mathbf{E}_{0020})$ contradicting the
uniqueness of $\mathcal{Q}_{\eta_0}$.

\begin{rem}
In contrast, the identity $2x^4+2y^4=(x^2-y^2)^2+(x^2+y^2)^2$
suggests $2\mathbf{E}_{0200}^{\otimes 2}+2\mathbf{E}_{0020}^{\otimes
2}$ and $(\mathbf{E}_{0200}-\mathbf{E}_{0020})^{\otimes
2}+(\mathbf{E}_{0200}+\mathbf{E}_{0020})^{\otimes 2}$ which are
identical Gram matrices.  The two polynomial expressions are said to
be obtained from one another by \textit{orthogonal transformation}.
See Proposition 2.10 of \cite{CLR95}, p.108.  It is for this reason
that by themselves it is not clear that each of \eqref{identical1}
or \eqref{identical2} differs from $\mathcal{Q}_{\eta_0}$ since
$\mathcal{Q}_{\eta}$ is unspecified.
\end{rem}

\noindent 5. That $q_{\eta_0}$ is \textit{coercive} is seen by
showing that the corresponding homogeneous system of four quadratic
equations has no solution in $\mathbb{C}^4\setminus\{\mathbf{0}\}$.
One starts with assuming a solution $(w,x,y,z)$ has one of its
coordinates equal to zero, cases that can be quickly eliminated.
Then, assuming a solution has all nonzero coordinates, one has by
using the last three quadratics of \eqref{sosquartic},
$y^2z=3\sqrt{\eta_0}wxy=zx^2$ etc., whence $x^2=y^2=z^2$, whence
$3\sqrt{\eta_0}|w|=|x|$ by any of the last three quadratics.  Then
$|w|^2=3\sqrt{\eta_0}|x|^2$ by the first, whence
$\sqrt{\eta_0}=\frac{1}{3}$ which is not true by Claim 2.\vskip.1in

The only task remaining is to prove the uniqueness of the Gram
matrix $\mathcal{Q}_{\eta_0}$.  Before that is done a bit more will
be said about finding \eqref{sosquartic}.

An initial choice of representation matrices for the forms $q_\eta$
is
\begin{multline}\mylabel{initial}
 \mathcal{S}_\eta=\mathbf{E}_{2000}^{\otimes
2}+\mathbf{E}_{0110}^{\otimes 2}+\mathbf{E}_{0011}^{\otimes
2}+\mathbf{E}_{0101}^{\otimes
2}\\-\dfrac{2}{3}(\mathbf{E}_{1100}\otimes_s
\mathbf{E}_{0011}+\mathbf{E}_{1010}\otimes_s
\mathbf{E}_{0101}+\mathbf{E}_{1001}\otimes_s
\mathbf{E}_{0110})+\eta(\mathbf{E}_{0200}^{\otimes
2}+\mathbf{E}_{0020}^{\otimes 2}+\mathbf{E}_{0002}^{\otimes 2})
\end{multline}

The $q_\eta$ are symmetric in $x,y$ and $z$.  As $\eta$ increases,
if $\mathcal{G}$ becomes the first Gram matrix encountered so would
be $\mathcal{G'}$ where $\mathcal{G'}$ is derived from $\mathcal{G}$
by permuting the indices for $x,y$ and $z$.  Averaging all such
permutations would produce a first Gram matrix that was symmetric in
$x,y$ and $z$.  Therefore the symmetry in the choice of
$\mathcal{S}_\eta$ is no loss of generality, and we expect that if a
Gram matrix uniquely represents a $q_\eta$, then  it will  be
symmetric in $x,y$ and $z$.

Arrange the basis elements $\mathbf{E}_{2000},\ldots$ according to
the linear order $w^2\prec x^2\prec y^2\prec z^2\prec wx\prec
yz\prec wy\prec zx\prec wz\prec xy$.  Then the matrix for
$\mathcal{S}_\eta$ with respect to the basis \eqref{E} is
\begin{equation}\mylabel{10by10}\left(\begin{array}{crrrclclcl}1&-b&-b&-b&&&&&&\\-b&\eta
&a&a&&&&&&\\-b&a&\eta &a&&&&&&\\-b&a&a&\eta
&&&&&&\\&&&&2b&-\frac{2}{3}&&&&
\\\\&& &&-\frac{2}{3}&1-2a&&&&\\&&&&&&2b&-\frac{2}{3}&&
\\\\&&&& &&-\frac{2}{3}&1-2a&&\\&&&&&&&&2b&-\frac{2}{3}
\\\\&&&&&& &&-\frac{2}{3}&1-2a\end{array}\right)\end{equation}
when the\textit{ parameters} $a=b=0$.  The unmarked entries are
$zero$.

The two parameters permit the addition of six \textit{changes} in a
way that also obey the symmetry considerations in $x,y$ and $z$. The
smallest value of $\eta$ that allows a choice of $a$ and $b$ so that
each of the four block matrices becomes rank-$1$ and $psd$ is the
$\eta_0$ defined above.  The minimizing choices are $a=\eta_0$ and
$b=\sqrt{\eta_0}$.

There are, however, \textit{twenty} independent changes $\Delta$ in
$S^2(S^2(\mathbb{R}^4))$ altogether.  Though the type of argument
being given can be made rigorous and lead to a uniqueness proof for
$\mathcal{Q}_{\eta_0}$, we will instead present another argument
which will also be elementary, but also clearly decisive while
computationally not too long if  Maple\tiny$^{TM}$\normalsize$ 10$
is used.  It is based on the observation \vskip.1in

\textit{Suppose} $\mathcal{G}$ \textit{is a Gram matrix.  Then a
necessary (but not sufficient) condition for} $\mathcal{G}+\Delta$
\textit{to be a Gram matrix is that} $\Delta$ \textit{be} psd
\textit{on} $Null(\mathcal{G})$, \textit{the null space of}
$\mathcal{G}:S^p(\mathbb{R}^n)\rightarrow S^p(\mathbb{R}^n)$,
\textit{i.e. for every} $\mathbf{t}\in Null(\mathcal{G})$ \textit{it
is necessary that} $\Delta \cdot \mathbf{t}\mathbf{t}\geq
0$.\vskip.1in

Let $N$ be a nonempty subspace of $S^p(\mathbb{R}^n)$.  When
$\mathcal{S}\cdot \mathbf{t}\mathbf{t}\geq 0$ fails to hold for some
$\mathbf{t}\in N$ while $\mathcal{S}\cdot \mathbf{s}\mathbf{s}> 0$
for an $\mathbf{s}\in N$, $\mathcal{S}$ is said to be \textit{not
definite} on $N$.  Thus \begin{multline}\mylabel{uniqueness?}\mbox{
\textit{If} $f(x)=\mathcal{G}\cdot \mathbf{x}^{\otimes
p}\mathbf{x}^{\otimes p}$ \textit{where} $\mathcal{G}$ \textit{is a
Gram matrix and if} every \textit{nonzero} $\Delta\in A^{2,p,n}$
\textit{is} }\\\mbox{ not definite \textit{on}
$Null(\mathcal{G})$,\textit{ then} $\mathcal{G}$ \textit{is the}
unique \textit{Gram matrix for} $f$.\hskip2in}\end{multline}

This is in fact  a statement about subspaces of $S^p(\mathbb{R}^n)$
and the Gram matrices that can be supported on their orthogonal
complements.  Consequently\vskip.1in

\textit{Let} $N$ \textit{be a subspace of} $S^p(\mathbb{R}^n)$
\textit{and} $\{\mathbf{t}_1,\ldots,\mathbf{t}_r\}$ \textit{a basis
for its orthogonal complement} $M$.  \textit{Suppose} every
\textit{nonzero} $\Delta\in A^{2,p,n}$ \textit{is} not definite
\textit{on} $N$.  \textit{Let} $T$ \textit{be any linear
transformation on} $M$. \textit{Then}
$\mathcal{G}_T=(T(\mathbf{t}_1))^{\otimes 2}+\cdots
+(T(\mathbf{t}_r))^{\otimes 2}$  \textit{is the} unique \textit{Gram
matrix for the} $sos$ $f_T(\mathbf{x})=\mathcal{G}_T\cdot
\mathbf{x}^{\otimes p}\mathbf{x}^{\otimes p}$.  \textit{The
collection of all such} $f_T$ \textit{is a convex cone of}
$\Sigma_{n,2p}$.

The last statement follows because if $\mathcal{G}_T$ and
$\mathcal{G}_U$ are $psd$ on $M$ so is their sum which will be given
by some $\mathcal{G}_V$ with the linear transformation $V$ on $M$
derived, for example, by using \eqref{outerproduct}.

\begin{rem} If, for example, $I$ is the identity on $M$ and $U$ is
an orthogonal transformation on $M$, then $f_I=f_U$.  This is
Proposition 2.10 of \cite{CLR95} again.
\end{rem}

Given a subspace $N\subset S^p(\mathbb{R}^n)$ of dimension $m$ the
following steps will be carried out in order to prove that certain
\textit{sums of squares},  supported like the above $f_T$ on the
orthogonal complement of $N$, have unique Gram matrices. \vskip.1in
\noindent 1. Form a general linear combination
$\mathbf{t}=a\mathbf{t}_1+b\mathbf{t}_2+\cdots$  of the $m$ basis
elements of $N$.\vskip.1in \noindent 2. Apply each element $\Delta$
of a basis  for  $A^{2,p,n}$ \eqref{deltas} to the general linear
combination, as $\Delta\cdot \mathbf{t}\mathbf{t}$, yielding a set
of homogeneous quadratic polynomials in the $m$ \textit{variables}
$a,b,\ldots$\vskip.1in \noindent 3. Thinking of each quadratic
polynomial from Step 2 as a \textit{linear} expression in the
monomials $a^2, b^2,\ldots, ab, ac,\ldots, bc,bd, \ldots$, write the
$\left(\begin{array}{c}\left(\begin{array}{c}n+p-1\\p\end{array}\right)+1\\\\2\end{array}\right)-\left(\begin{array}{c}n+2p-1\\2p\end{array}\right)$
by $\left(\begin{array}{c}m+1\\2\end{array}\right)$ coefficient
matrix for these linear expressions.\vskip.1in \noindent 4. Bring
the coefficient matrix of Step 3 to reduced row echelon form thereby
obtaining a set of quadratic polynomials that is equivalent to the
set of Step 2, i.e. each set of quadratics consists of only linear
combinations of quadratics from the other.\vskip.1in \noindent 5.
Show that no nontrivial linear combination of the quadratics from
Step 4 yields a definite or semi-definite quadratic in the $m$
variables.

\begin{rem}
Steps 1 through 4 can be thought of as supplying details for an
algorithm designed to show a certain semi-algebraic set consists
(here) of one point (the origin).  See the second algorithmic step
and the remark that follows on p. 101 of \cite{PW98}.  Here it is
Step 5 that is uncertain.
\end{rem}

In the case of interest here, there are $m=6$ variables
$a,b,c,d,e,f$ and the coefficient matrix is $20\times 21$, more
quadratic monomials than quadratic polynomials.

To simplify calculation, $\mathbb{R}^4$ (and thus \eqref{Qeta0}) is
scaled in the variable $w$, replaced with
$\frac{w}{3\sqrt{\eta_0}}$. Define
$$\gamma_0:=27\eta_0^{3/2}$$
Then \eqref{Qeta0} is a linear combination with\textit{ positive}
coefficients of the tensors
\begin{multline}\mylabel{gammatensors}\left(3\mathbf{E}_{2000}-\gamma(\mathbf{E}_{0200}+\mathbf{E}_{0020}+\mathbf{E}_{0002})\right)^{\otimes
2},\\(\mathbf{E}_{1100}-\mathbf{E}_{0011})^{\otimes
2},(\mathbf{E}_{1010}-\mathbf{E}_{0101})^{\otimes 2},\mbox{ and
}(\mathbf{E}_{1001}-\mathbf{E}_{0110})^{\otimes 2}\end{multline}
when $\gamma=\gamma_0$.  By Claims 2 and 3 at the beginning of this
section the estimate $\sqrt{\eta_0}<1/3$ holds, whence
 $0<\gamma_0 <1$.  Thus all assertions about $q_{\eta_0}$
 \eqref{sosquartic} will hold once the following theorem is proved.
\begin{thm}\mylabel{gammaquartic}
Given any $\gamma$, $0<\gamma <1$, and any choice of $a_j>0,\;
j=1,2,3,4$, the quartic form of  \;$\mathbb{R}[w,x,y,z]$
\begin{multline}\mylabel{conerep2}
a_1(3w^2-\gamma(x^2+y^2+z^2))^2+a_2(wx-yz)^2+a_3(wy-zx)^2+a_4(wz-xy)^2
\end{multline}
is coercive and has a unique Gram matrix.
\end{thm}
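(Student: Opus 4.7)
The theorem splits into two claims: coerciveness and uniqueness of the Gram matrix. For \emph{coerciveness} I would check directly that the quadratics $p_1 = 3w^2-\gamma(x^2+y^2+z^2)$, $p_2 = wx-yz$, $p_3 = wy-zx$, $p_4 = wz-xy$ have no common nonzero complex zero. If any coordinate vanishes, say $x = 0$, then $p_2=p_3=p_4=0$ forces $yz = wy = wz = 0$, and a short case split on $w$ vs.\ $y = z$, combined with $p_1 = 0$, collapses the solution to the origin; the other three cases follow by the $\mathfrak{S}_3$-symmetry of the system in $(x,y,z)$ and a direct check for $w = 0$. If all four coordinates are nonzero, pairwise multiplication of $wx = yz$, $wy = zx$, $wz = xy$ gives $w^2 = x^2 = y^2 = z^2$, and plugging this into $p_1 = 0$ forces $3w^2 = 3\gamma w^2$, i.e.\ $\gamma = 1$, contradicting $\gamma < 1$.

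For \emph{uniqueness} I would invoke the criterion \eqref{uniqueness?}: letting $\mathbf{g}_1, \ldots, \mathbf{g}_4$ be the four tensors listed in \eqref{gammatensors} and $\mathcal{G} = \sum_j a_j \mathbf{g}_j^{\otimes 2}$, it suffices to show that every nonzero $\Delta \in A^{2,2,4}$ is indefinite on $N := \mathrm{Null}(\mathcal{G})$. Because $\mathbf{g}_1$ is supported on the four-dimensional ``diagonal'' subspace $\mathrm{span}\{\mathbf{E}_{2000}, \mathbf{E}_{0200}, \mathbf{E}_{0020}, \mathbf{E}_{0002}\}$ while $\mathbf{g}_2, \mathbf{g}_3, \mathbf{g}_4$ involve disjoint pairs of the complementary ``cross'' basis tensors, the four are linearly independent, so $N$ has dimension six, independent of the positive weights $a_j$. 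Computing the orthogonality conditions in the $\{\mathbf{E}_\alpha\}$-basis gives the explicit basis $\mathbf{t}_1 = \mathbf{E}_{0200}-\mathbf{E}_{0020}$, $\mathbf{t}_2 = \mathbf{E}_{0020}-\mathbf{E}_{0002}$, $\mathbf{t}_3 = \gamma\mathbf{E}_{2000}+\mathbf{E}_{0200}+\mathbf{E}_{0020}+\mathbf{E}_{0002}$, $\mathbf{t}_4 = \mathbf{E}_{1100}+\mathbf{E}_{0011}$, $\mathbf{t}_5 = \mathbf{E}_{1010}+\mathbf{E}_{0101}$, $\mathbf{t}_6 = \mathbf{E}_{1001}+\mathbf{E}_{0110}$ of $N$; the parameter $\gamma$ enters only through $\mathbf{t}_3$.

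I would then run the five-step algorithm stated just before the theorem: substitute $\mathbf{t} = a\mathbf{t}_1 + \cdots + f\mathbf{t}_6$, evaluate $\Delta \cdot \mathbf{t}\mathbf{t}$ on each of twenty basis elements of $A^{2,2,4}$, assemble the resulting $20 \times 21$ coefficient matrix in the quadratic monomials $a^2,\ldots,ef$ (whose entries depend polynomially on $\gamma$), and row-reduce. The main obstacle is Step~5: verifying that \emph{no} nontrivial linear combination of the reduced quadratics is positive or negative semi-definite on $\mathbb{R}^6$, and that this persists \emph{uniformly} for every $\gamma \in (0,1)$. My plan is to carry out the row reduction in Maple so as to track the $\gamma$-dependence exactly, and then exploit the $\mathfrak{S}_3$-symmetry of \eqref{conerep2} in $(x,y,z)$ (which permutes $(\mathbf{t}_4, \mathbf{t}_5, \mathbf{t}_6)$, acts on $(\mathbf{t}_1, \mathbf{t}_2)$, and fixes $\mathbf{t}_3$) to cluster the reduced quadratics into a small number of orbit representatives. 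For each representative I would exhibit explicit six-tuples — with entries depending algebraically on $\gamma$ and valid on all of $(0,1)$ — on which a generic linear combination of orbit members takes strictly positive and strictly negative values. Once this indefiniteness is verified, \eqref{uniqueness?} delivers uniqueness of the Gram matrix, completing the proof together with the coerciveness already established.
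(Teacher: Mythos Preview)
Your framework matches the paper's exactly: the coerciveness argument is essentially the paper's Claim~5, and for uniqueness you invoke criterion \eqref{uniqueness?} via the same five-step algorithm on the same six-dimensional null space. The only divergence is your plan for Step~5. The paper carries out the row reduction and finds that the twenty reduced quadratics consist of the fourteen indefinite cross monomials $ab,ac,\ldots,df$ (every off-diagonal pair except $ef$) together with six polynomials $a^2+\tfrac{\gamma}{1-\gamma}ef$, $b^2+\tfrac{\gamma}{1-\gamma}ef$, $c^2+\tfrac{\gamma}{1-\gamma}ef$, $d^2+\tfrac{1}{1-\gamma}ef$, $e^2+2ef$, $f^2+2ef$. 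For $0<\gamma<1$ each pairs a pure square with a \emph{positive} multiple of the single monomial $ef$, so indefiniteness of every nonzero linear combination is immediate: a semi-definite candidate would need all six square coefficients nonnegative, forcing the $ef$-coefficient to be at least $2(\lambda_5+\lambda_6)$ and making the $(e,f)$ principal $2\times2$ minor negative unless everything vanishes. Your proposed $\mathfrak{S}_3$-orbit analysis with $\gamma$-dependent test six-tuples is therefore unnecessary---and, as stated, not quite sufficient either, since exhibiting sign changes for a ``generic linear combination of orbit members'' on fixed test vectors does not by itself rule out semi-definiteness for \emph{every} nonzero $\Delta$, which is what \eqref{uniqueness?} demands.
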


\begin{proof}
Coerciveness follows as for $q_{\eta_0}$ in Claim 5 at the beginning
of this section.

Fix any $0<\gamma <1$ and denote by $\mathcal{G}_\gamma$ any linear
combination, with \textit{positive} coefficients, of the tensors
\eqref{gammatensors}.  A basis for the null space of
$\mathcal{G}_\gamma$ is supplied by
\begin{multline*}\mathbf{E}_{1100}+\mathbf{E}_{0011},\;\mathbf{E}_{1010}+\mathbf{E}_{0101},\;\mathbf{E}_{1001}+\mathbf{E}_{0110},\;
\gamma\mathbf{E}_{2000}+\mathbf{E}_{0200}+\mathbf{E}_{0020}+\mathbf{E}_{0002},\\\mathbf{E}_{0200}-\mathbf{E}_{0020},\mbox{
and }\mathbf{E}_{0200}-\mathbf{E}_{0002}\end{multline*} as
\eqref{dimsp}, \eqref{normalbasis} and \eqref{ndotn} show.  A
general linear combination of these is
$\mathbf{g}=2a\mathbf{E}_{1100}+2a\mathbf{E}_{0011}+2b\mathbf{E}_{1010}+2b\mathbf{E}_{0101}+2c\mathbf{E}_{1001}+2c\mathbf{E}_{0110}+
\gamma
d\mathbf{E}_{2000}+(d+e+f)\mathbf{E}_{0200}+(d-e)\mathbf{E}_{0020}+(d-f)\mathbf{E}_{0002}$

A basis for the changes $A^{2,2,4}$ divides into three sets
depending on the number of multi-indices $\alpha$ with $\alpha !=2$
that are used to express a $\Delta$.  The first type has two such
$\alpha$ as in $$\mathbf{E}_{0110}^{\otimes
2}-\frac{1}{2}\mathbf{E}_{0200}\otimes_s\mathbf{E}_{0020}$$ there
are $6$ of these altogether. The second type uses one as in
$$\frac{1}{2}\mathbf{E}_{2000}\otimes_s\mathbf{E}_{0110}-\frac{1}{2}\mathbf{E}_{1100}\otimes_s\mathbf{E}_{1010}$$
There are $12$ of these.  Finally there are only $2$ independent
changes that use no $\alpha !=2$.  We will use
$$\frac{1}{2}\mathbf{E}_{1100}\otimes_s\mathbf{E}_{0011}-\frac{1}{2}\mathbf{E}_{1010}\otimes_s\mathbf{E}_{0101}\mbox{
and
}\frac{1}{2}\mathbf{E}_{1100}\otimes_s\mathbf{E}_{0011}-\frac{1}{2}\mathbf{E}_{1001}\otimes_s\mathbf{E}_{0110}$$
The last type was used implicitly in the initial choice
\eqref{initial}.  The first type was introduced by the parameters
in \eqref{10by10}.

Keeping in mind that by \eqref{normalbasis} and \eqref{ndotn}
$\mathbf{E}_\alpha\cdot\mathbf{E}_\alpha=\frac{\alpha !}{2}$ and
computing $\Delta\cdot \mathbf{g}\mathbf{g}$ we obtain \vskip.1in

$a^2-\gamma d(d+e+f)$

$b^2-\gamma d(d-e)$

$c^2-\gamma d(d-f)$

$a^2-(d-e)(d-f)$

$b^2-(d+e+f)(d-f)$

$c^2-(d+e+f)(d-e)$ \vskip.1in then\vskip.1in

$\gamma da-bc$

$\gamma db-ac$

$\gamma dc-ab$

$(d+e+f)a-bc$

$(d+e+f)b-ac$

$(d+e+f)c-ab$

$(d-e)a-bc$

$(d-e)b-ac$

$(d-e)c-ab$

$(d-f)a-bc$

$(d-f)b-ac$

$(d-f)c-ab$ \vskip.1in and then\vskip.1in

$a^2-b^2$

$a^2-c^2$\vskip.1in

Linearly ordering the monomial squares in alphabetical order
followed by the indefinite monomials in alphabetical order
$a^2,b^2,\ldots,f^2,ab,ac,\ldots,af,bc,\ldots,df,ef$ the $20\times
21$ coefficient matrix of Step 3 above is obtained.  Passing to
reduced row echelon form, a matrix that consists of a $20\times 20$
\textit{identity} matrix together with a \textit{21st column} with
successive entries $$\dfrac{\gamma}{1-\gamma}\mbox{,
}\dfrac{\gamma}{1-\gamma}\mbox{, }\dfrac{\gamma}{1-\gamma}\mbox{,
}\dfrac{1}{1-\gamma}\mbox{, }2\mbox{,
}2,0,0,0,0,0,0,0,0,0,0,0,0,0,0$$ is obtained.

Thus an equivalent set of quadratic polynomials is

\begin{multline}\mylabel{equivpolys}
$$\hskip1.7in a^2+\dfrac{\gamma}{1-\gamma}ef$$\\$$b^2+\dfrac{\gamma}{1-\gamma}ef$$\\
$$c^2+\dfrac{\gamma}{1-\gamma}ef$$\\$$\hskip2in d^2+\dfrac{1}{1-\gamma}ef\hskip2in$$\\$$e^2+2ef$$\\$$f^2+2ef\hskip2.1in$$
\end{multline}
together with the collection of $14$ indefinite monomials
$ab,ac,\ldots,df$ ($ef$ not included). Precisely when $0<\gamma<1$
is there no nontrivial linear combination of these that yields a
definite or semi-definite quadratic polynomial.  Thus uniqueness
follows from \eqref{uniqueness?}.
\end{proof}

More generally, the quartics \eqref{conerep2} are $pd$ whenever
$\gamma\neq 0$ and $\gamma\neq 1$.  When $\gamma<0$, expanding the
first square makes it transparent that the quartics \eqref{conerep2}
have \textit{positive definite} Gram matrices and are thus coercive
$sos$.  When $\gamma >1$ it is not clear in this way, but it is
clear from \eqref{equivpolys} that there is a $\Delta$ that is
positive definite on the null space of the $\mathcal{G}_\gamma$
(from the proof) that represents a \eqref{conerep2}.  By taking
$\epsilon>0$ small enough $\mathcal{G}_\gamma+\epsilon\Delta$ will
be $pd$ by the proposition below.

In some cases there only exist nontrivial $\Delta$ that are positive
\textit{semi-definite} on the null space of a $psd$ $\mathcal{G}$.
In those cases the proposition below gives necessary and sufficient
conditions for $\mathcal{G}+\epsilon\Delta$ to be $psd$, i.e. for
the associated $sos$ to \textit{not} have a unique Gram matrix.
When $Null(\mathcal{G})\cap Null(\Delta)\neq Null(\mathcal{G})$ the
propsition gives necessary and sufficient conditions for
$\mathcal{G}+\epsilon\Delta$ to be $psd$ with greater rank than
$\mathcal{G}$.  It provides conditions to \textit{build up} the
ranks of Gram matrices associated to an $sos$ in an attempt to prove
coerciveness of the $sos$.

The \textit{length} of a vector $\mathbf{x}\in \mathbb{R}^m$ is
denoted $|\mathbf{x}|$ and the \textit{operator norm} of an $m\times
m$ matrix $B$, as a transformation on $\mathbb{R}^m$, is denoted
$|B|=\max_{|\mathbf{x}|=1}|B\mathbf{x}|$.

\begin{prop}\mylabel{matrixprop}
Let $A$ be real symmetric positive semi-definite $m\times m$ matrix.
Let $B$ be real symmetric  $m\times m$ matrix that is $psd$ on
$Null(A)\subset \mathbb{R}^m$, i.e. $\mathbf{z}\cdot
B\mathbf{z}\geq0$ for all $\mathbf{z}\in Null(A)$.

Then for all $\epsilon>0$ small enough $A+\epsilon B$ is a positive
semi-definite matrix \emph{if and only if} whenever $\mathbf{z}_1\in
Null(A)$ and $\mathbf{z}_1\cdot B\mathbf{z}_1=0$ it follows that
$B\mathbf{z}_1=\mathbf{0}$.

In the case $A+\epsilon B$ is $psd$ $Null( A+\epsilon B)\subset
Null(A)$ for all $\epsilon >0$ small enough, with strict containment
when $\mathbf{z}\cdot B\mathbf{z}$ does not vanish for every
$\mathbf{z}\in Null(A)$.

If $B$ is $pd$ on $Null(A)$ then $A+\epsilon B$ is $pd$ for all
$\epsilon >0$ small enough.
\end{prop}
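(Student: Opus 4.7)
The plan is to work with two orthogonal decompositions. Since $A$ is real symmetric, $\Real^m = \text{Null}(A)\oplus\text{Range}(A)$, and $A$ is bounded below by its smallest positive eigenvalue $\lambda>0$ on $\text{Range}(A)$. Within $\text{Null}(A)$, I introduce
\[
N_0 = \set{\mathbf{z}\in\text{Null}(A) : \mathbf{z}\cdot B\mathbf{z}=0},
\]
which is a genuine linear subspace because the Cauchy--Schwarz inequality for the psd symmetric form $B|_{\text{Null}(A)}$ forces $\abs{\mathbf{z}_1\cdot B\mathbf{z}_2}^2\leq (\mathbf{z}_1\cdot B\mathbf{z}_1)(\mathbf{z}_2\cdot B\mathbf{z}_2)$ for all $\mathbf{z}_1,\mathbf{z}_2\in\text{Null}(A)$. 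On its orthogonal complement $N_1 := N_0^\perp\cap\text{Null}(A)$, the restriction of $B$ is strictly positive with smallest eigenvalue $\mu>0$. The proposition's hypothesis says precisely that $N_0\subset\text{Null}(B)$ as a subspace of $\Real^m$, not merely that $B$ vanishes on $N_0$ when viewed inside $\text{Null}(A)$.

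For the sufficient direction, assuming this hypothesis, decompose $\mathbf{x}=\mathbf{z}_0+\mathbf{z}_1+\mathbf{w}$ with $\mathbf{z}_0\in N_0$, $\mathbf{z}_1\in N_1$, $\mathbf{w}\in\text{Range}(A)$. All $\mathbf{z}_0$ contributions drop out (since $A\mathbf{z}_0=\mathbf{0}$ and $B\mathbf{z}_0=\mathbf{0}$), leaving
\[
\mathbf{x}\cdot(A+\epsilon B)\mathbf{x} = \mathbf{w}\cdot A\mathbf{w} + \epsilon\mathbf{z}_1\cdot B\mathbf{z}_1 + 2\epsilon\mathbf{z}_1\cdot B\mathbf{w} + \epsilon\mathbf{w}\cdot B\mathbf{w}.
\]
I bound $\mathbf{w}\cdot A\mathbf{w}\geq \lambda\abs{\mathbf{w}}^2$ and $\mathbf{z}_1\cdot B\mathbf{z}_1\geq \mu\abs{\mathbf{z}_1}^2$, absorb the cross term via $\abs{2\epsilon\mathbf{z}_1\cdot B\mathbf{w}}\leq \tfrac{\epsilon\mu}{2}\abs{\mathbf{z}_1}^2 + \tfrac{2\epsilon\abs{B}^2}{\mu}\abs{\mathbf{w}}^2$ (AM--GM), and estimate $\epsilon\abs{\mathbf{w}\cdot B\mathbf{w}}\leq \epsilon\abs{B}\abs{\mathbf{w}}^2$. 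For $\epsilon$ smaller than a constant depending only on $\lambda,\mu,\abs{B}$, the $\abs{\mathbf{w}}^2$ coefficient remains at least $\lambda/2$, yielding
\[
\mathbf{x}\cdot(A+\epsilon B)\mathbf{x} \geq \tfrac{\lambda}{2}\abs{\mathbf{w}}^2+\tfrac{\epsilon\mu}{2}\abs{\mathbf{z}_1}^2\geq 0.
\]
This simultaneously establishes psd-ness and identifies $\text{Null}(A+\epsilon B)=N_0\subset\text{Null}(A)$, the containment being strict precisely when some $\mathbf{z}\in\text{Null}(A)$ has $\mathbf{z}\cdot B\mathbf{z}>0$.

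For the necessary direction, if $\mathbf{z}_1\in N_0$ but $B\mathbf{z}_1\neq\mathbf{0}$, then by symmetry of $B$ there exists $\mathbf{v}$ with $\mathbf{z}_1\cdot B\mathbf{v}\neq 0$. Along $\mathbf{x}=\mathbf{z}_1+t\mathbf{v}$, using $A\mathbf{z}_1=\mathbf{0}$ and $\mathbf{z}_1\cdot B\mathbf{z}_1=0$ the expansion collapses to $t^2(\mathbf{v}\cdot A\mathbf{v}+\epsilon\mathbf{v}\cdot B\mathbf{v}) + 2\epsilon t(\mathbf{z}_1\cdot B\mathbf{v})$; for $t$ of small magnitude and sign opposite to $\mathbf{z}_1\cdot B\mathbf{v}$ the linear-in-$t$ term dominates and $\mathbf{x}\cdot(A+\epsilon B)\mathbf{x}<0$, contradicting psd-ness. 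The final pd statement is then immediate: if $B$ is pd on $\text{Null}(A)$ then $N_0=\set{\mathbf{0}}$, and the sufficiency estimate forces $\text{Null}(A+\epsilon B)=\set{\mathbf{0}}$. The only real technical work is the $\epsilon$-bookkeeping in the sufficiency step; the conceptual point is that Cauchy--Schwarz for $B|_{\text{Null}(A)}$ promotes $N_0$ to a genuine subspace, so $B$ either annihilates it in $\Real^m$ or provides an exploitable negative perturbation direction.
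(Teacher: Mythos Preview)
Your argument is correct and follows the same overall architecture as the paper's proof: orthogonally split $\mathbb{R}^m$ along $\text{Null}(A)$, further split $\text{Null}(A)$ into the ``$B$-null'' part and its complement, and control the cross terms with the smallest positive eigenvalues of $A$ and of $B|_{\text{Null}(A)}$. The sufficiency estimate and the identification of $\text{Null}(A+\epsilon B)$ are essentially identical to the paper's.

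Two small differences are worth noting. First, the paper works directly with $\text{Null}(A)\cap\text{Null}(B)$ rather than your $N_0=\{\mathbf{z}\in\text{Null}(A):\mathbf{z}\cdot B\mathbf{z}=0\}$; under the hypothesis these coincide, so your Cauchy--Schwarz step (while correct and conceptually clarifying) is not strictly needed. Second, for the necessary direction the paper argues via the psd square root of $A+\epsilon B$: from $\mathbf{z}_1\cdot(A+\epsilon B)\mathbf{z}_1=0$ one gets $(A+\epsilon B)\mathbf{z}_1=\mathbf{0}$ directly, hence $B\mathbf{z}_1=\mathbf{0}$. Your perturbation argument along $\mathbf{z}_1+t\mathbf{v}$ is an equally valid alternative that avoids invoking the square root. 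Finally, the paper proves the pd statement first as a standalone warm-up, whereas you recover it at the end as the special case $N_0=\{\mathbf{0}\}$; your ordering is arguably cleaner.
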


\begin{proof}
$A$ and $B$ are assumed nontrivial.  The last statement is proved
first.

Let $a>0$ be the smallest nonzero eigenvalue of $A$.  Let $b>0$ be
the smallest number satisfying $\mathbf{z}\cdot B\mathbf{z} \geq
b|\mathbf{z}|^2$ for all $\mathbf{z}\in Null(A)$.  Each
$\mathbf{x}\in \mathbb{R}^m$ has a unique decomposition
$\mathbf{x}=\mathbf{y}+\mathbf{z}$ where $\mathbf{z}\in Null(A)$ and
$\mathbf{y}$ is orthogonal to $Null(A)$, i.e. by the symmetry of
$A$, each
 $\mathbf{y}$ is a sum of the eigenvectors of $A$ that have \textit{positive}
 eigenvalues.  Thus
 \begin{multline}\mylabel{below}
\mathbf{x}\cdot(A+\epsilon B)\mathbf{x}=\mathbf{y}\cdot A
\mathbf{y}+\epsilon\mathbf{y}\cdot B
\mathbf{y}+2\epsilon\mathbf{y}\cdot B
\mathbf{z}+\epsilon\mathbf{z}\cdot B
\mathbf{z}\geq\\a|\mathbf{y}|^2-\epsilon |B||\mathbf{y}|^2-2\epsilon
|B| |\mathbf{y}||\mathbf{z}|+\epsilon b|\mathbf{z}|^2
 \end{multline}
 For $\mathbf{x}\neq\mathbf{0}$ this last quantity will always be
 positive for any $\epsilon$ satisfying $0< \epsilon <
 \frac{ab}{|B|^2+b|B|}$, proving the positive definiteness of $A+\epsilon
 B$.

 Now assume $B$ is $psd$ on $Null(A)$.  The first conclusion is
 proved next.

 Assume for some $\epsilon >0$ that $A+\epsilon
 B$ is $psd$.  Let $\mathbf{z}_0\in Null(A)$ and assume $\mathbf{z}_0\cdot B
\mathbf{z}_0=0$.  Thus $\mathbf{z}_0\cdot(A+\epsilon
B)\mathbf{z}_0=0$.  Since $A+\epsilon
 B$ has a $psd$ square root it follows that
$(A+\epsilon B)\mathbf{z}_0=0$ whence $B\mathbf{z}_0=\mathbf{0}$.

For the other direction and for each $\mathbf{x}\in \mathbb{R}^m$,
with $\mathbf{x}=\mathbf{y}+\mathbf{z}$ as before, the
\textit{equality} in \eqref{below} is again obtained.  Each
$\mathbf{z}\in Null(A)$ has a unique decomposition
$\mathbf{z}=\mathbf{z}_0+\mathbf{z}_1$ where $\mathbf{z}_0\in
Null(A)\cap Null(B)$ and $\mathbf{z}_1\in Null(A)$ is
\textit{orthogonal  } to $Null(A)\cap Null(B)$.  In the event
$Null(A)\cap Null(B)=Null(A)$ it follows that
$\mathbf{z}=\mathbf{z}_0$ and \eqref{below} yields
$\mathbf{x}\cdot(A+\epsilon B)\mathbf{x}\geq
a|\mathbf{y}|^2-\epsilon |B||\mathbf{y}|^2\geq 0$ for every
$\mathbf{x}$ if $\epsilon$ is small enough, with vanishing occurring
only when $\mathbf{x}\in Null(A)$.  Otherwise there is a smallest
number $b_1>0$ such that $\mathbf{z}_1\cdot B\mathbf{z}_1 \geq
b_1|\mathbf{z}_1|^2$ for all $\mathbf{z}_1\in Null(A)$ orthogonal to
$Null(A)\cap Null(B)$.  This follows by the hypothesis,
$\mathbf{z}_1\cdot B\mathbf{z}_1=0$ implies
$B\mathbf{z}_1=\mathbf{0}$, whence $\mathbf{z}_1\in Null(A)\cap
Null(B)$ whence $\mathbf{z}_1=\mathbf{0}$.  Consequently
$\mathbf{z}$ may be replaced by $\mathbf{z}_1$ and $b$ by $b_1$ in
\eqref{below}.  For all $\mathbf{x}\notin Null(A)\cap Null(B)$ and
$\epsilon >0$ small enough \eqref{below} is then \textit{positive},
completing the proof of the first conclusion.

It has been shown for $\epsilon >0$ small enough that positivity of
\eqref{below} fails only when $\mathbf{x}\in Null(A)\cap Null(B)$,
proving the second conclusion.

\end{proof}

\begin{example}
$A=\left(\begin{array}{llc}1&0&0\\0&0&0\\0&0&0\end{array}\right)$ is
$psd$ and
$B=\left(\begin{array}{llc}0&b&0\\b&0&0\\0&0&1\end{array}\right)$ is
$psd$ on $Null(A)$, but whenever $b\neq 0$ and $\epsilon\neq0$
$A+\epsilon B$ is not $psd$.

This phenomenon persists when the $B$ are specialized to represent
\textit{changes} $\Delta$.  Consider the coercive $sos$ in
noncoercive representation $(x^2+y^2)^2+z^4+y^2z^2+x^2z^2$, i.e.
with Gram matrix
$\mathcal{A}=(\mathbf{E}_{200}+\mathbf{E}_{020})^{\otimes
2}+\mathbf{E}_{002}^{\otimes 2}+\mathbf{E}_{011}^{\otimes
2}+\mathbf{E}_{101}^{\otimes 2}$.  Then
$\Delta=\mathbf{E}_{002}\otimes_s\mathbf{E}_{110}-\mathbf{E}_{011}\otimes_s\mathbf{E}_{101}$
is trivially $psd$ on $Null(\mathcal{A})$, but
$\mathcal{A}+\epsilon\Delta$ is not $psd$  unless $\epsilon=0$. Here
$\Delta\cdot\mathbf{E}_{110}\mathbf{E}_{110}=0$ while
$\Delta\mathbf{E}_{110}=\frac{1}{2}\mathbf{E}_{002}$.
\end{example}

\section{Proof of Theorem \ref{thm1}}\mylabel{proof1}

Theorem \ref{thm1} follows  from the next theorem.

\begin{thm}
Given  $\gamma$, $0<\gamma<1/3$, the positive definite quartic form
of  $\mathbb{R}[u,v,w,x,y,z]$
\begin{equation}\mylabel{noncoerciverep}
f=(u^2+v^2+vw)^2+(w^2-\gamma
(x^2+y^2+z^2))^2+(wx-yz)^2+(wy-zx)^2+(wz-xy)^2
\end{equation}
is a noncoercive sum of squares.
\end{thm}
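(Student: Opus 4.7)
The plan is to establish three assertions about $f$: (i) $f$ is positive definite, (ii) the Gram matrix $\mathcal{G}_f$ coming from the displayed representation \eqref{noncoerciverep} is the \emph{unique} Gram matrix for $f$, and (iii) the point $\mathbf{z}_0=(1,i,0,0,0,0)\in\mathbb{C}^6$ is a common root of the five quadratics of \eqref{noncoerciverep}. Granted (ii) and (iii), noncoerciveness follows at once: any $sos$ rewriting $f=\sum_j p_j^2$ corresponds to a decomposition $\mathcal{G}_f=\sum_j \mathbf{h}_j\otimes \mathbf{h}_j$ of the unique Gram matrix, so each tensor $\mathbf{h}_j$ lies in the column span of $\mathcal{G}_f$, which is the five-dimensional real span of the original quadratic tensors (they have pairwise disjoint monomial supports and are therefore linearly independent). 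Each of those five quadratics vanishes at $\mathbf{z}_0$, so every $p_j$ vanishes at $\mathbf{z}_0$, furnishing the common complex root required by noncoerciveness.

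Claims (i) and (iii) are short. For (iii), $1+i^2+i\cdot 0=0$ and the other four quadratics involve only $w,x,y,z$, all zero at $\mathbf{z}_0$. For (i), $f\ge 0$ as a sum of squares; if $f(\mathbf{a})=0$ for real $\mathbf{a}$, then $u^2+v^2+vw=0$ forces $u=0$ and $v(v+w)=0$, while the remaining four squares form the quartic $(w^2-\gamma(x^2+y^2+z^2))^2+(wx-yz)^2+(wy-zx)^2+(wz-xy)^2$, which is exactly the case $(a_1,a_2,a_3,a_4)=(\tfrac{1}{9},1,1,1)$ and parameter $3\gamma$ of \eqref{conerep2}. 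Since $3\gamma\in(0,1)$, Theorem \ref{gammaquartic} delivers $pd$ and coerciveness, forcing $w=x=y=z=0$ and hence $v=0$.

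The main obstacle is (ii). By the criterion \eqref{uniqueness?} I must show that every nonzero change $\Delta\in A^{2,2,6}$ fails to be positive semi-definite on $Null(\mathcal{G}_f)\subset S^2(\mathbb{R}^6)$. Since $\mathcal{G}_f$ has rank $5$, the null space has dimension $21-5=16$, with explicit basis: the eight cross monomials $uv,uw,ux,uy,uz,vx,vy,vz$; the pair $u^2-v^2,\ u^2-2vw$ (a basis for the orthogonal complement of $u^2+v^2+vw$ inside $\mathrm{span}\{u^2,v^2,vw\}$); the triple $x^2-y^2,\ y^2-z^2,\ 3\gamma w^2+x^2+y^2+z^2$ (orthogonal complement of $w^2-\gamma(x^2+y^2+z^2)$ inside $\mathrm{span}\{w^2,x^2,y^2,z^2\}$); and the three tensors $wx+yz,\ wy+xz,\ wz+xy$. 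I then run the five-step algorithm from the proof of Theorem \ref{gammaquartic}: parameterize a general $\mathbf{g}=\sum_{i=1}^{16}a_i\mathbf{t}_i\in Null(\mathcal{G}_f)$, evaluate $\Delta\cdot\mathbf{g}\mathbf{g}$ for each basis element of the $105$-dimensional space $A^{2,2,6}$, assemble the coefficient matrix against the $\binom{17}{2}=136$ quadratic monomials in $a_1,\ldots,a_{16}$, row-reduce, and verify that no nonzero linear combination of the reduced polynomials is semi-definite on $\mathbb{R}^{16}$.

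Organizing the changes by their $u,v$ content clarifies what appears. Those supported on $(w,x,y,z)$-monomials alone reduce, by Theorem \ref{gammaquartic}'s own computation, to the system \eqref{equivpolys} in $a_{11},\ldots,a_{16}$, which admits no semi-definite combination for $3\gamma\in(0,1)$. Those coupling one of $u,v$ to $\{w,x,y,z\}$ produce balance polynomials of the form $a_k^2+(\text{indefinite cross term})$ for each of $a_2,\ldots,a_8$. The remaining changes among $u^2,v^2,uv,vw$ link $a_1,a_9,a_{10}$ to one another and, through the distinguished $vw$ term of $u^2+v^2+vw$, also to $a_{13}$. Every squared variable $a_i^2$ is thereby balanced against an indefinite cross term, ruling out any nonzero semi-definite combination and yielding the uniqueness of $\mathcal{G}_f$. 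Executing this verification is the principal labor of the proof, but it is structurally identical to the $20\times 21$ reduction already carried out in the proof of Theorem \ref{gammaquartic}, only larger; the distinctive role of the monomial $vw$ in $u^2+v^2+vw$ is exactly what couples the $(u,v)$ sector to the $(w,x,y,z)$ sector and prevents any $\Delta$ from being semi-definite.
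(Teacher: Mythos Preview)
Your overall strategy---prove that $\mathcal{G}_f$ is the \emph{unique} Gram matrix, then read off noncoerciveness from the common root $(1,i,0,0,0,0)$---is sound in principle, and the paper's Introduction even asserts that such uniqueness holds. But your proof of (ii) has a genuine gap: you announce the $105\times 136$ row reduction and claim that ``every squared variable $a_i^2$ is thereby balanced against an indefinite cross term,'' yet you never carry out or display the reduction, and the informal sector-by-sector description does not constitute a verification. The coupling assertion in your last paragraph (that the $vw$ monomial forces the $(u,v)$ and $(w,x,y,z)$ blocks to interact so as to kill every semi-definite combination) is exactly the crux, and it is left as a promise rather than a computation. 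Without it, (ii) is unproved.

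The paper avoids this large computation entirely by proving a strictly weaker statement that still suffices: not that $\mathcal{G}_0$ is unique, but merely that \emph{every} Gram matrix for $f$ keeps $\mathbf{r}=\mathrm{Re}\,\mathbf{z}_0^{\otimes 2}$ and $\mathbf{q}=\mathrm{Im}\,\mathbf{z}_0^{\otimes 2}$ in its null space. The key observation is that among all $105$ basis changes there is \emph{exactly one}, namely $\Delta_1=\mathbf{E}_{110000}^{\otimes 2}-\tfrac{1}{2}\mathbf{E}_{200000}\otimes_s\mathbf{E}_{020000}$, that is nonzero on either $\mathbf{r}$ or $\mathbf{q}$; hence any Gram matrix $\mathcal{G}_0+\Delta$ with $\mathbf{r}$ or $\mathbf{q}$ outside its null space must carry a \emph{positive} coefficient $\delta$ on $\Delta_1$. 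The paper then examines a single $9\times 9$ principal submatrix of $\mathcal{G}_0+2\delta\Delta_1$ with five further parameter changes, uses the uniqueness of the $4$-variable Gram matrix (Theorem~\ref{gammaquartic}) to freeze the $(w,x,y,z)$ entries, and derives a contradiction from a short chain of $2\times 2$ and $3\times 3$ minor computations. This targeted argument replaces your proposed full-scale reduction with a handful of determinant checks; it is both shorter and complete. If you wish to keep your uniqueness route, you must actually execute the reduction and exhibit the resulting equivalent system (analogous to \eqref{equivpolys}) in the sixteen variables, then argue indefiniteness---a substantial computation that your write-up presently omits.
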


\begin{proof}
The last four terms sum to  a $pd$ form over $\mathbb{R}^4$ as shown
in the last section.  From this, positive definiteness over
$\mathbb{R}^6$ follows.  On the other hand $(1,i,0,0,0,0)\in
\mathbb{C}^6$ is a root for each of the five squared quadratics,
i.e. the real and imaginary parts of
\begin{equation}\mylabel{complexzero}
(\mathbf{e}^1+i\mathbf{e}^2)^{\otimes
2}=\mathbf{E}_{200000}-\mathbf{E}_{020000}+2i\mathbf{E}_{110000}:=\mathbf{r}+i\mathbf{q}
\end{equation}
are in the null space of the Gram matrix $\mathcal{G}_0$ that gives
representation \eqref{noncoerciverep} for $f$.  Using
\eqref{strategy}, noncoerciveness of $f$ will be proved by showing
that \textit{every} Gram matrix for $f$ contains $\mathbf{r}$ and
$\mathbf{q}$ \eqref{complexzero} in its null space.

Denote
$\Delta_1=-\frac{1}{2}\mathbf{E}_{200000}\otimes_s\mathbf{E}_{020000}+\mathbf{E}_{110000}^{\otimes
2}$.  Then \begin{equation}\mylabel{rq1}\Delta_1 \cdot
\mathbf{r}\mathbf{r}=\Delta_1 \cdot
\mathbf{q}\mathbf{q}=1\end{equation}

There is a basis
\begin{equation}\mylabel{rqbasis}
\{\Delta_1,\Delta_2,\ldots,\Delta_{105}\}
\end{equation}
for $A^{2,2,6}$ with $\Delta_1$ \eqref{rq1} as its first member so
that
\begin{equation}\mylabel{stvanish}
\Delta_j \cdot \mathbf{r}\mathbf{r}=\Delta_j \cdot
\mathbf{q}\mathbf{q}=0
\end{equation}
for all $j=2,3,\ldots,105$.  This follows because  the basis
elements of \eqref{samespan}
$\mathbf{E}_\alpha\otimes_s\mathbf{E}_\beta-\mathbf{E}_{\alpha'}\otimes_s\mathbf{E}_{\beta'}$,
$\alpha+\beta=\alpha'+\beta'$,  permit one of the equalities in
\eqref{stvanish} \textit{not} to hold only when either both
$\mathbf{E}_\alpha$ and $\mathbf{E}_\beta$ are contained in
$\{\mathbf{E}_{200000},\mathbf{E}_{020000},\mathbf{E}_{110000}\}$ or
both $ \mathbf{E}_{\alpha'}$ and $\mathbf{E}_{\beta'}$ are
contained.  The only basis element like this is $\pm \Delta_1$.

\begin{rem}\mylabel{theremark}
This relationship between a $\mathbf{z}^{\otimes 2}$,
$\mathbf{z}\in\mathbb{C}^n$, and some basis for $A^{2,2,n}$ is
general.  The uniqueness does not quite hold in $A^{2,p,n}$, $p\geq
3$, however.  For example, both $\mathbf{E}_{12}^{\otimes
2}-\frac{1}{2}\mathbf{E}_{21}\otimes_s\mathbf{E}_{03}$ and
$\mathbf{E}_{21}^{\otimes
2}-\frac{1}{2}\mathbf{E}_{12}\otimes_s\mathbf{E}_{30}$ are nonzero
as quadratic forms on the real and imaginary parts of
$(\mathbf{e}^1+i\mathbf{e}^2)^{\otimes 3}$.
\end{rem}

If $\Delta_1$ is removed from the basis \eqref{rqbasis} and $\Delta$
is taken in the subsequent span so that $\mathcal{G}_0+\Delta$ is a
Gram matrix, Proposition \ref{matrixprop} and \eqref{stvanish} then
imply that $\mathbf{r}$ and $\mathbf{q}$ will also be in the null
space of $\mathcal{G}_0+\Delta$ .  Together with \eqref{rq1} this
implies
\begin{multline}\mylabel{thisimplies}
\mbox{\textit{Any linear combination} $\Delta$ \textit{of basis
elements \eqref{rqbasis}, for which} $\mathcal{G}_0+\Delta$
\textit{is a Gram matrix }}\\\mbox{\textit{and for which at least
one of} $\mathbf{r}$ \textit{or} $\mathbf{q}$ \textit{is} not
\textit{in the null space of} $\mathcal{G}_0+\Delta$, \textit{must
have a}\hskip1in}\\\mbox{positive \textit{coefficient on}
$\Delta_1$.\hskip6in}
\end{multline}
Hence let $\delta>0$ and consider the following principal submatrix
of $\mathcal{G}_0+2\delta\Delta_1$ where the order
$\mathbf{E}_{011000}\prec\mathbf{E}_{200000}\prec\mathbf{E}_{020000}\prec\mathbf{E}_{002000}\prec\mathbf{E}_{000200}\prec
\mathbf{E}_{110000}\prec\mathbf{E}_{101000}\prec\mathbf{E}_{010100}\prec\mathbf{E}_{100100}$
(i.e. $vw\prec u^2\prec v^2\prec w^2\prec x^2\prec uv\prec uw\prec
vx\prec ux$) has been chosen, and $a=b=c=d=e=0$.  Blank entries are
\textit{zero}.

\begin{equation}\mylabel{9by9}\left(\begin{array}{ccccccccc}1+2a&1-c&1&&&&&&\\\\1-c&1
&1-\delta &-d&e&&&&\\\\1&1-\delta &1 &-a&b&&&&\\\\&-d&-a&1 &-\gamma
&&&&\\\\&e&b&-\gamma &\gamma^2&&&&
\\&& &&&2\delta &c&&\\&&&&&c&2d&&
\\&&&& &&&-2b&\\&&&&&&&&-2e
\end{array}\right)\end{equation}

The following notation for principal submatrices of \eqref{9by9}
will be used.  $[1\;3]$ denotes the submatrix
$\left(\begin{array}{ll}1+2a&1\\1&1\end{array}\right)$ formed from
the $1$st and $3$rd rows and columns  of \eqref{9by9}, etc.

The parameters $a,b,c,d,e$ correspond to the \textit{changes}\;
$2\mathbf{E}_{011000}^{\otimes
2}-\mathbf{E}_{020000}\otimes_s\mathbf{E}_{002000}$,\hskip.4in$\mathbf{E}_{020000}\otimes_s\mathbf{E}_{000200}-2\mathbf{E}_{010100}^{\otimes
2}$,\;$\mathbf{E}_{110000}\otimes_s\mathbf{E}_{101000}-\mathbf{E}_{200000}\otimes_s\mathbf{E}_{011000}$,\;
$2\mathbf{E}_{101000}^{\otimes
2}-\mathbf{E}_{200000}\otimes_s\mathbf{E}_{002000}$,\;$\mathbf{E}_{200000}\otimes_s\mathbf{E}_{000200}-2\mathbf{E}_{100100}^{\otimes
2}$ respectively.

No other \textit{nonzero} entries may be altered:  The four $1$'s in
$[1\;2\;3]$ because there is no basis element of $A^{2,2,6}$ that is
expressed using these positions.  The three entries with $\delta$
because the only change possible has already been chosen.  $[4\;5]$
because the quartic form $f(0,0,w,x,y,z)$ has a unique Gram matrix
by Theorem \ref{gammaquartic}, and $[4\;5]$ is a submatrix of that
Gram matrix; if a $\mathcal{G}_0+\Delta$ is a Gram matrix, then by
deleting all rows and columns that involve the variables $u$ and $v$
one obtains a Gram matrix for $f(0,0,w,x,y,z)$.

When $a=c=0$ it follows that $\det[1\;2\;3]=-\delta^2<0$.  Since all
principal minors of a $psd$ matrix must be nonnegative, $a=c=0$
cannot hold.  It will first be shown that $a=0$ is necessary and
then that $c=\delta$ is necessary, leading to a  contradiction that
proves the theorem .

The determinant of $[1\;3]$ forces $a\geq 0$.  Introducing $b$,
$\det[3\;4\;5]=-(b-a\gamma)^2$ whence $b=a\gamma\geq 0$.  But
submatrix $[8]$ implies $b\leq0$ whence $a=0$ also.

With $a=0$ it follows that $\det[1\;2\;3]=-(\delta-c)^2$ whence
$c=\delta$.  Consequently $[6\;7]$ requires $d>0$.  Now
$\det[2\;4\;5]=-(e-d\gamma)^2$ whence $e>0$ contradicting submatrix
$[9]$.

\end{proof}

\section{A $6$th order example}\mylabel{cubic}

Consider the family of sextics
\begin{equation}
f_\rho (x,y,z)=x^2(\rho^2 x^2 +\rho y^2 -\frac{1}{2} z^2 )^2
+y^2(\rho^2 y^2 +\rho z^2 -\frac{1}{2} x^2 )^2+z^2(\rho^2 z^2 +\rho
x^2 -\frac{1}{2} y^2 )^2
\end{equation}
The three cubic polynomials that are squared have a common
nontrivial root only when $\rho=0, \rho^3=-\frac{1}{2},
\rho^3=-\frac{5+3\sqrt{3}}{4} \mbox{ or }
\rho^3=\frac{-5+3\sqrt{3}}{4}$.  In each case the root can be taken
 in $\mathbb{R}^3$.  Thus $f_\rho$ is $pd$ if and only if $\rho^3$
does not take the four listed values.  In addition, every $pd$ form
$f_\rho$ is  coercive.

Put $\eta_0=(1+\sqrt{5})^{-3}$.  Then for the Choi-Lam sextics
\eqref{qeta},  $s_{\eta_0}=(1+\sqrt{5})f_\rho$ when
$\rho=(1+\sqrt{5})^{-1}$.  It will be shown that $(1+\sqrt{5})^{-1}$
belongs to an interval of $\rho$'s for which the $f_\rho$ have
unique Gram matrices.  This uniqueness implies, as in the quartic
case, that $\eta_0$ is the smallest value of $\eta$ for which
$s_\eta$ is an $sos$.  The identity used in \eqref{2different} may
be replaced with $x^6+y^6=(x^3-2xy^2)^2+(y^3-2x^2y)^2$.

Hence, an apparent Gram matrix $\mathcal{G}_\rho$ for each $f_\rho$
is
$$(\rho^2 \mathbf{E}_{300}+\rho \mathbf{E}_{120} -\frac{1}{2}
\mathbf{E}_{102} )^{\otimes 2}+(\rho^2 \mathbf{E}_{030}+\rho
\mathbf{E}_{012} -\frac{1}{2} \mathbf{E}_{210} )^{\otimes 2}+(\rho^2
\mathbf{E}_{003}+\rho \mathbf{E}_{201} -\frac{1}{2} \mathbf{E}_{021}
)^{\otimes 2}$$ acting on the space $S^3(\mathbb{R}^3)$ which has
$10$ dimensions.  Therefore using
$\mathbf{E}_\alpha\cdot\mathbf{E}_\alpha=\frac{\alpha!}{6}$  the
null space for $\mathcal{G}_\rho$ is spanned by the vectors
$\mathbf{E}_{300}-3\rho \mathbf{E}_{120}, \mathbf{E}_{120}+2\rho
\mathbf{E}_{102}, \mathbf{E}_{030}-3\rho \mathbf{E}_{012},
\mathbf{E}_{012}+2\rho \mathbf{E}_{210}, \mathbf{E}_{003}-3\rho
\mathbf{E}_{201}, \mathbf{E}_{201}+2\rho \mathbf{E}_{021}\mbox{ and
} \mathbf{E}_{111}$.  A general linear combination is
\begin{multline}
\mathbf{g}=a\mathbf{E}_{300}+3(b-\rho a )\mathbf{E}_{120} + 6\rho b
\mathbf{E}_{102}+c\mathbf{E}_{030}+3(d-\rho c )\mathbf{E}_{012} +
6\rho d \mathbf{E}_{210}\\+e\mathbf{E}_{003}+3(f-\rho e
)\mathbf{E}_{201} + 6\rho f \mathbf{E}_{021}+6g\mathbf{E}_{111}
\end{multline}

The $27$ dimensions of the subspace $A^{2,3,3}$ of \textit{changes}
may be briefly described as follows.

$$\frac{1}{2}\mathbf{E}_{300}\otimes_s\mathbf{E}_{120}-\frac{1}{2}\mathbf{E}_{210}\otimes_s\mathbf{E}_{210}$$
is representative of $6$ changes.

$$\frac{1}{2}\mathbf{E}_{300}\otimes_s\mathbf{E}_{111}-\frac{1}{2}\mathbf{E}_{210}\otimes_s\mathbf{E}_{201}$$
is representative of $3$.

$$\frac{1}{2}\mathbf{E}_{300}\otimes_s\mathbf{E}_{030}-\frac{1}{2}\mathbf{E}_{210}\otimes_s\mathbf{E}_{120}$$
 representative of $3$.

$$\frac{1}{2}\mathbf{E}_{300}\otimes_s\mathbf{E}_{021}-\frac{1}{2}\mathbf{E}_{201}\otimes_s\mathbf{E}_{120}$$
 representative of $6$.

 $$\frac{1}{2}\mathbf{E}_{300}\otimes_s\mathbf{E}_{021}-\frac{1}{2}\mathbf{E}_{210}\otimes_s\mathbf{E}_{111}$$
 representative of $6$.

 $$\frac{1}{2}\mathbf{E}_{210}\otimes_s\mathbf{E}_{012}-\frac{1}{2}\mathbf{E}_{111}\otimes_s\mathbf{E}_{111}$$
 representative of $3$.
 Keeping in mind the examples
 $\mathbf{E}_{300}\cdot\mathbf{E}_{300}=1,
 \mathbf{E}_{120}\cdot\mathbf{E}_{120}=1/3 \mbox{ and }
 \mathbf{E}_{111}\cdot\mathbf{E}_{111}=1/6$, and computing  $\Delta \cdot \mathbf{g}\mathbf{g}$ for each change yields the quadratic polynomials

$-\rho a^2 + ab-4\rho^2 d^2$

$-\rho c^2 + cd-4\rho^2 f^2$

$-\rho e^2 + ef-4\rho^2 b^2$

$2\rho ab- \rho^2 e^2 -f^2 +2\rho ef$

$2\rho cd- \rho^2 a^2 -b^2 +2\rho ab$

$2\rho ef- \rho^2 c^2 -d^2 +2\rho cd$\vskip.1in

$ag-2\rho df+2\rho^2 de$

$cg-2\rho bf+2\rho^2 af$

$eg-2\rho bd+2\rho^2 bc$\vskip.1in

$ac+2\rho^2 ad -2 \rho bd$

$ce+2\rho^2 cf -2 \rho df$

$ae+2\rho^2 be -2 \rho bf$\vskip.1in

$3\rho af-\rho^2 ae-bf +\rho be$

$3\rho bc-\rho^2 ac-bd +\rho ad$

$3\rho ed-\rho^2 ce-df +\rho cf$

$-\rho ac +ad -4\rho^2 bd$

$-\rho ce +cf -4\rho^2 df$

$-\rho ae +be -4\rho^2 bf$\vskip.1in

$af-dg$

$bc-fg$

$de-bg$

$-\rho ac +ad +\rho eg -fg$

$-\rho ce +cf +\rho ag -bg$

$-\rho ae +be +\rho cg -dg$\vskip.1in

$-2\rho^2 cd +2 \rho d^2 -g^2$

$-2\rho^2 ef +2 \rho f^2 -g^2$

$-2\rho^2 ab +2 \rho b^2 -g^2$\vskip.1in

Linearly order the $28$ quadratic monomials
$a^2,b^2,\ldots,g^2,ab,ac,\ldots,ag,bc,\\\ldots,eg,fg$ as before and
put the resulting $27\times 28$ coefficient matrix into reduced
echelon form.  When the $26$th column (the $ef$ column) is removed
the result is the \textit{identity} matrix.  Putting $\sigma=
\frac{1-16\rho^3}{\rho (1-4\rho^3)}, \tau=\frac{3\rho}{1-4\rho^3}
\mbox{ and } \phi=\frac{4\rho^2(2\rho^3+1)}{1-4\rho^3}$, the $26$th
column has successive entries $$-\sigma, -\tau, -\sigma,
-\tau,-\sigma,
-\tau,-\phi,-1,0,0,0,0,0,0,0,0,0,0,-1,0,0,0,0,0,0,0,0$$

Thus an equivalent set of polynomials is
\begin{multline}\mylabel{equivpolys2}
$$\hskip1.82in a^2-\sigma ef$$\\$$b^2-\tau ef$$\\
$$c^2-\sigma ef$$\\$$\hskip2in d^2-\tau ef\hskip2in$$\\$$e^2-\sigma ef$$\\$$\hskip2.1in f^2-\tau ef\hskip2.1in$$\\$$\hskip2in g^2-\phi ef\hskip2.1in$$
\\$$\hskip2in ab- ef\hskip2.2in$$\\$$\hskip1.8in cd- ef\hskip2.2in$$
\end{multline}
together with the remaining $18$ indefinite monomials none of which
appear in the polynomials \eqref{equivpolys2}.  By
\eqref{uniqueness?} a sufficient requirement for $f_\rho$ to have a
unique Gram matrix is that there exists no nontrivial linear
combination of the polynomials \eqref{equivpolys2} that is a
definite or  semi-definite quadratic polynomial in the variables
$a,\ldots,g$.  This requirement is equivalent to showing for a given
$\rho$ that every nontrivial choice of parameters
$A,B,C,D,E,F,G,J,K$ in

\begin{equation}\mylabel{7by7}\tiny\left(\begin{array}{ccccccc}2A&J\!\!-\!\!A\sigma\!\!-\!\!B\tau&&&&&\\\\J\!\!-\!\!A\sigma\!\!-\!\!B\tau&2B
&&&&&\\\\&&2C&K\!\!-\!\!C\sigma\!\!-\!\!D\tau&&&
\\\\&&K\!\!-\!\!C\sigma\!\!-\!\!D\tau&2D&&&\\\\&&&&2E&\!-\!E\sigma\!\!-\!\!F\tau\!\!-\!\!G\phi\!\!-\!\!J\!\!-\!\!K&
\\\\&&&&\!-\!E\sigma\!\!-\!\!F\tau\!\!-\!\!G\phi\!\!-\!\!J\!\!-\!\!K&2F&\\\\&&&&&&2G
\end{array}\right)\normalsize\end{equation}
produces an indefinite matrix.

When $\sigma, \tau \mbox{ and } \phi$ are not all of the same sign
there exist, by \eqref{equivpolys2}, choices of positive
$A,\ldots,G$ that make \eqref{7by7} definite.  Lack of a common sign
holds for $-1/2<\rho^3<0$ and $ 1/16 \leq \rho^3$ .  When
$\rho^3<-1/2$ each of $\sigma, \tau \mbox{ and } \phi$ is negative
while each is positive for $0<\rho^3<1/16$.

Restricting to those nontrivial choices with $G=J=K=0$, all produce
indefinite matrices \eqref{7by7} if and only if $\sigma\tau>1$.  For
example, the $2\times 2$ minor $4EF-(E\sigma+F\tau)^2<0$ if and only
if $\sigma\tau>1$ if and only if
$-\frac{5+3\sqrt{3}}{4}<\rho^3<\frac{-5+3\sqrt{3}}{4}$.  Therefore
the remaining intervals for $\rho^3$ for which all nontrivial
\eqref{7by7} are possibly not definite are the open intervals
$(-\frac{5+3\sqrt{3}}{4},\frac{-1}{2}) \mbox{ and }
(0,\frac{-5+3\sqrt{3}}{4})$.  That $\phi$ shares the same sign with
$\sigma$ and $\tau$ in these intervals shows that choosing $G>0$
does not restrict these intervals further.  Neither can nonzero
choices of $J$ and $K$.  The endpoints of the intervals yield
$f_\rho$ that are not $pd$.

The foregoing proves

\begin{thm}\mylabel{symevenunique}
The forms $f_\rho$ are $pd$ and have unique Gram matrices if and
only if  $-\frac{5+3\sqrt{3}}{4}< \rho^3 <-\frac{1}{2} \mbox{ or  }
0 <\rho^3 < \frac{-5+3\sqrt{3}}{4}$.  All other $pd$ $f_\rho$ have
Gram matrices of rank 10.  Each $f_\rho$, for $\rho^3$ not equal to
the endpoints of the above intervals, is coercive.
\end{thm}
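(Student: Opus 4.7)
The plan is to apply the uniqueness criterion \eqref{uniqueness?}: I will show that on the stated intervals for $\rho^3$, every nonzero $\Delta\in A^{2,3,3}$ fails to be semi-definite on $\mathrm{Null}(\mathcal{G}_\rho)$, which forces $\mathcal{G}_\rho$ to be the unique Gram matrix for $f_\rho$; on the complementary $\rho$-values, I will produce a $\Delta$ that is $pd$ on $\mathrm{Null}(\mathcal{G}_\rho)$ and invoke Proposition \ref{matrixprop} to lift $\mathcal{G}_\rho$ into the interior of the cone of Gram matrices, producing a Gram matrix of full rank $10$.

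The starting point is the explicit $7$-dimensional basis of $\mathrm{Null}(\mathcal{G}_\rho)$ already listed, whose general linear combination is the tensor $\mathbf{g}$ with parameters $a,b,c,d,e,f,g$. First I would evaluate $\Delta\cdot\mathbf{g}\mathbf{g}$ on each of the $27$ basis elements of $A^{2,3,3}$ (which split into the six families indicated by the six representative changes), producing a list of $27$ homogeneous quadratic polynomials in $a,\dots,g$. Reading each such quadratic as a linear functional on the $28$ monomials $a^2,\dots,g^2,ab,\dots,fg$ gives a $27\times 28$ coefficient matrix, and row reduction collapses this to the equivalent system \eqref{equivpolys2} together with $18$ purely indefinite monomials that share no variable with \eqref{equivpolys2}. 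After this reduction, the task reduces to deciding when a symmetric matrix of the block shape \eqref{7by7}, parametrized by coefficients $A,\dots,G,J,K$ on the polynomials \eqref{equivpolys2}, can be positive semi-definite for some nontrivial choice.

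Next I would carry out the sign analysis on $\sigma,\tau,\phi$. When these are not all of one sign, \eqref{equivpolys2} already provides a positive combination that is definite (take suitable positive $A,\dots,F$), so uniqueness \emph{fails}; this rules out the ranges $-1/2<\rho^3<0$ and $\rho^3\geq 1/16$. In the remaining regime $\sigma,\tau,\phi$ share a sign, and I would first restrict to combinations with $G=J=K=0$, where each of the three $2\times 2$ off-diagonal blocks has determinant $4EF-(E\sigma+F\tau)^2$; this is $<0$ for all nontrivial choices iff $\sigma\tau>1$, which a direct computation shows is equivalent to $-\tfrac{5+3\sqrt{3}}{4}<\rho^3<\tfrac{-5+3\sqrt{3}}{4}$. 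Intersecting with the sign constraint gives the two claimed open intervals. It then remains to verify that reintroducing $G>0$ or nonzero $J,K$ cannot enlarge the uniqueness region: since $\phi$ has the same sign as $\sigma,\tau$ throughout, the $G$-contribution only sharpens indefiniteness, and the $J,K$ terms enter only the off-diagonal entries already controlled by $\sigma\tau>1$. For $\rho$ outside the uniqueness intervals, the combination that produced a definite block exhibits a $\Delta$ that is $pd$ on $\mathrm{Null}(\mathcal{G}_\rho)$, and Proposition \ref{matrixprop} yields a full-rank Gram matrix $\mathcal{G}_\rho+\epsilon\Delta$.

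Coerciveness is then immediate from the common-root list stated just before the theorem: the three cubic forms that are squared share a nontrivial complex root only at $\rho=0$ and $\rho^3\in\{-\tfrac{1}{2},-\tfrac{5+3\sqrt{3}}{4},\tfrac{-5+3\sqrt{3}}{4}\}$, and at every other $pd$ $\rho$ the representation $\mathcal{G}_\rho$ itself witnesses coerciveness via \eqref{strategy}. The main obstacle is the bookkeeping in the sign/definiteness analysis of \eqref{7by7}, in particular verifying rigorously that the parameters $G,J,K$ give no extra room beyond what the reduced diagonal polynomials already dictate; everything else is a mechanical (if long) linear-algebra computation that Maple can handle and that the excerpt has essentially laid out.
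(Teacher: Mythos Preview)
Your proposal is correct and follows essentially the same approach as the paper: the paper's proof is precisely the computation preceding the theorem statement, and you reproduce its structure step by step---the $27$ quadratics $\Delta\cdot\mathbf{g}\mathbf{g}$, the row reduction to \eqref{equivpolys2} plus $18$ off-diagonal monomials, the sign analysis of $\sigma,\tau,\phi$, the $\sigma\tau>1$ criterion from the $2\times2$ blocks of \eqref{7by7}, the dismissal of $G,J,K$, and the appeal to Proposition~\ref{matrixprop} for rank $10$ outside the intervals. One small slip: the $18$ monomials do share variables with \eqref{equivpolys2}; what matters (and what the paper uses) is that they contribute only to off-diagonal entries of the $7\times7$ matrix outside the three $2\times2$ blocks, so the block-minor argument for $\sigma\tau>1$ already forces all diagonal coefficients, then $J,K$, and hence the whole matrix, to vanish.
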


$(\sqrt{5}+1)^{-3}$ is contained in the second interval.

To prove Theorem \ref{thm2} we will be content with a single
example.  Take $\rho=-1$.

\begin{thm}
The positive definite sextic form of $\mathbb{R}[w,x,y,z]$
\begin{multline}\mylabel{symeven} g(w,x,y,z):=f_{-1}(\sqrt{w^2+x^2},y,z)=(w^3+wx^2-wy^2-\frac{1}{2}wz^2)^2\\+(xw^2+x^3-xy^2-\frac{1}{2}xz^2)^2
+(y^3-yz^2-\frac{1}{2}yw^2-\frac{1}{2}yx^2)^2
+(z^3-zw^2-zx^2-\frac{1}{2}zy^2)^2\end{multline} is a noncoercive
sum of squares.
\end{thm}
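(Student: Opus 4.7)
The plan has two parts. For positive definiteness, since $g(w,x,y,z) = f_{-1}(\sqrt{w^2+x^2},y,z)$ and $\rho = -1$ has $\rho^3 = -1 \in (-\tfrac{5+3\sqrt{3}}{4}, -\tfrac{1}{2})$, Theorem~\ref{symevenunique} guarantees that $f_{-1}$ is $pd$ and has a unique Gram matrix, whence $g>0$ on $\mathbb{R}^4 \setminus \{\mathbf{0}\}$. The candidate common complex root is $\mathbf{z}_0 = (1,i,0,0)$: here $w^2+x^2 = y = z = 0$, so each of the four cubic factors in \eqref{symeven} vanishes. Writing $\mathbf{z}_0^{\otimes 3} = \mathbf{r} + i\mathbf{q}$ gives $\mathbf{r} = \mathbf{E}_{3000} - 3\mathbf{E}_{1200}$ and $\mathbf{q} = 3\mathbf{E}_{2100} - \mathbf{E}_{0300}$; both lie in the null space of the natural Gram matrix $\mathcal{G}_0$ of \eqref{symeven}, as direct computation confirms.

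By the strategy \eqref{strategy}, noncoerciveness will follow once I show that every $\Delta \in A^{2,3,4}$ with $\mathcal{G}_0 + \Delta$ a Gram matrix satisfies $(\mathcal{G}_0 + \Delta)\mathbf{r} = (\mathcal{G}_0 + \Delta)\mathbf{q} = 0$. By Proposition~\ref{matrixprop} (applied to $A = \mathcal{G}_0$, $B = \Delta$, $\mathbf{z}_1 \in \{\mathbf{r}, \mathbf{q}\}$) this reduces to the scalar identities $\Delta \cdot \mathbf{r}\mathbf{r} = \Delta \cdot \mathbf{q}\mathbf{q} = 0$. My first step is to invoke the polynomial identity $\Delta \cdot \mathbf{x}^{\otimes 3}\mathbf{x}^{\otimes 3} = 0$: the vanishing coefficients of $w^6$, $x^6$, $w^4 x^2$, $w^2 x^4$ produce four relations which collapse both $\Delta \cdot \mathbf{r}\mathbf{r}$ and $\Delta \cdot \mathbf{q}\mathbf{q}$ to the same scalar $X := -2\Delta_{(3000)(1200)} + \Delta_{(1200)(1200)}$. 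Proposition~\ref{matrixprop} already forces $X \ge 0$, so it suffices to obtain the reverse inequality.

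To obtain $X \le 0$ I will exploit the uniqueness of $f_{-1}$'s Gram matrix in two different specializations: setting $w = 0$ yields $g(0,x,y,z) = f_{-1}(x,y,z)$, and setting $x = 0$ yields $g(w,0,y,z) = f_{-1}(w,y,z)$. By Theorem~\ref{symevenunique} each of the corresponding principal submatrices of $\mathcal{G}_0 + \Delta$ is rigidly determined, which forces $\Delta_{\alpha\beta} = 0$ whenever $\alpha_1 = \beta_1 = 0$ or $\alpha_2 = \beta_2 = 0$ and thereby pins down most of the relevant entries. Finally, paralleling the argument in Section~\ref{proof1}, I will examine a principal submatrix of $\mathcal{G}_0 + \Delta$ containing the indices $(3000), (1200), (2100), (0300)$---whose entries are now either rigidly fixed or linear in $\Delta_{(3000)(1200)}$ and $\Delta_{(1200)(1200)}$---and use the nonnegativity of its principal minors to force $X = 0$. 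The main obstacle, as flagged in Remark~\ref{theremark}, is that for $p = 3$ the space of ``bad'' changes in $A^{2,3,4}$ is not one-dimensional as it was in the quartic case: several basis elements (notably $\mathbf{E}_{3000} \otimes_s \mathbf{E}_{1200} - 2\mathbf{E}_{2100}^{\otimes 2}$ and $\mathbf{E}_{2100} \otimes_s \mathbf{E}_{0300} - 2\mathbf{E}_{1200}^{\otimes 2}$) contribute to the same linear functional $X$, so the minor analysis must simultaneously control all of them.
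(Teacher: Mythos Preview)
Your overall strategy matches the paper's: use the uniqueness of the Gram matrix for $f_{-1}$ (Theorem~\ref{symevenunique}) to freeze the $w=0$ and $x=0$ principal blocks, then derive a contradiction from positive semidefiniteness of a well-chosen principal submatrix. Your reduction of both $\Delta\cdot\mathbf{r}\mathbf{r}$ and $\Delta\cdot\mathbf{q}\mathbf{q}$ to a single scalar $X$ is correct---it follows directly from $\Delta\cdot\mathbf{z}_0^{\otimes 3}\mathbf{z}_0^{\otimes 3}=0$---and in fact substitutes for the paper's separate $w\leftrightarrow x$ symmetrization step.

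The gap is in your final submatrix analysis. A principal submatrix supported only on $\{(3000),(1200),(2100),(0300)\}$ cannot force $X\le 0$: restricting to $y=z=0$ gives $g(w,x,0,0)=(w^2+x^2)^3$, which is a \emph{coercive} binary sextic by \eqref{positiveresult}, so there exist Gram matrices for this restriction in which $(1,i)$ is not a common root. Concretely, that $4\times 4$ block also carries a third free parameter (from the $w^3x^3$ class, e.g.\ $\Delta_{(3000)(0300)}$, linearly tied to $\Delta_{(2100)(1200)}$) beyond the two you name, and even accounting for it the psd constraints on this block alone do not pin down $X$.

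The paper obtains the contradiction by enlarging the submatrix to include indices involving $y$: specifically $(1020),(0120),(0030),(2010),(0210)$ and, crucially, $(1110)$. The uniqueness of $\mathcal{F}_{-1}$ freezes entries such as $(3000)(1020)$, $(0300)(0120)$, $(0030)(2010)$, $(0030)(0210)$, while the remaining free entries (the paper's parameters $a,b,c$, sitting in positions like $(1200)(1020)$ and $(2010)(0210)$) are tied to the diagonal entry at $(1110)(1110)$ through the $w^2x^2y^2$ coefficient constraint. The $3\times 3$ minors on $\{(1200),(3000),(1020)\}$ and $\{(2100),(0300),(0120)\}$ then force $a$ and $b$ to equal the positive $\delta$-type quantity, the block on $\{(0030),(2010),(0210)\}$ forces $c=0$, and the $(1110)(1110)$ diagonal is thereby driven negative. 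Your outline needs this $y$-coupling---and in particular the index $(1110)$---to close; without it the argument stalls.
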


\begin{proof}
Let $\mathcal{G}_0$ denote the apparent Gram matrix for $g$ and let
$\mathcal{F}_{-1}$ denote the unique Gram matrix for $f_{-1}$.

For $\mathbf{z}\in \mathbb{C}^4$ denote $z_1^2+z_2^2=\xi^2$.  Then
the precise relationship between common complex roots for $sos$
representations of $g$ and $f_{-1}$ is
$\mathcal{G}_0\mathbf{z}^{\otimes 3}=\mathbf{0}$ if and only if
$\mathcal{F}_{-1}(\xi,z_3,z_4)^{\otimes 3}=\mathbf{0}$. Consequently
by Theorem \ref{symevenunique} and \eqref{strategy} $\xi=z_3=z_4=0$
when $\mathbf{z}^{\otimes 3}$ is in the null space of
$\mathcal{G}_0$.  Thus $\mathbf{z}:=(1,i,0,0)$ may be taken, up to
scaling, as the only nontrivial common root in the $sos$
representation \eqref{symeven} for $g$.

For $g$ to be coercive there must exist a $\Delta$ such that
$\mathcal{G}_0 + \Delta$ is a Gram matrix and $\Delta
\mathbf{z}^{\otimes 3}\neq \mathbf{0}$ \eqref{strategy}.  Therefore,
similarly to the quartic case, at least one of
$\Delta_1=\mathbf{E}_{1200}^{\otimes
2}-\frac{1}{2}\mathbf{E}_{2100}\otimes_s\mathbf{E}_{0300}$ or
$\Delta_2=\mathbf{E}_{2100}^{\otimes
2}-\frac{1}{2}\mathbf{E}_{1200}\otimes_s\mathbf{E}_{3000}$ (see
Remark \ref{theremark}) must be included in $\Delta$ with a
\textit{positive} coefficient.  However, if $\Delta'$ is obtained
from $\Delta$ by permuting the 1st and 2nd components of each
multi-index of the basis elements \eqref{E}, then $\mathcal{G}_0 +
\Delta'$ would also be a Gram matrix because of the symmetry in $w$
and $x$ of \eqref{symeven}.  Further, because of  positive
semi-definiteness, $\mathbf{z}^{\otimes 3}$ is not in the null space
of $\mathcal{G}_0 +\frac{1}{2}\Delta+ \frac{1}{2}\Delta'$ when it is
not in the null space of $\mathcal{G}_0 + \Delta$.  Consequently,
for $g$ to be coercive, values for the parameters $a,b,c,d$ with
$a+b+c+d=0$ in
\begin{equation}\mylabel{new10by10}\left(\begin{array}{rrrrrrrccc}1+2\delta &1-\delta
&-1+a&&&&&&&\\1-\delta&1&-1&&&&&&&\\-1+a&-1&1&&&&&&&\\&&&1+2\delta
&1-\delta &-1+b&&&&\\&&&1-\delta&1&-1&&&&\\&&&-1+b&-1&1
&&&&\\&&&&&&1&-\frac{1}{2}&-\frac{1}{2}&
\\\\&&&&&&-\frac{1}{2}&\frac{1}{4}&\frac{1}{4}+c&\\\\&&&&&&-\frac{1}{2}&\frac{1}{4}+c&\frac{1}{4}&
\\\\&&&&&&&&&d\end{array}\right)\end{equation}
 must be
found that make \eqref{new10by10} $psd$ when $\delta>0$.  Here
\eqref{new10by10} is the principal submatrix of $\mathcal{G}_0
+2\delta\Delta_1+ 2\delta\Delta_2$ corresponding to
$\mathbf{E}_{1200}\prec\mathbf{E}_{3000}\prec\mathbf{E}_{1020}\prec\mathbf{E}_{2100}\prec\mathbf{E}_{0300}\prec
\mathbf{E}_{0120}\prec\mathbf{E}_{0030}\prec\mathbf{E}_{2010}\prec\mathbf{E}_{0210}\prec\mathbf{E}_{1110}$
(i.e. $wx^2\prec w^3\prec wy^2\prec w^2x\prec x^3\prec xy^2\prec
y^3\prec w^2y\prec x^2y\prec wxy$).  The parameters represent the
three  \textit{changes}
$\mathbf{E}_{2100}\otimes_s\mathbf{E}_{0120}-\mathbf{E}_{1110}\otimes_s\mathbf{E}_{1110},
\mathbf{E}_{2010}\otimes_s\mathbf{E}_{0210}-\mathbf{E}_{1110}\otimes_s\mathbf{E}_{1110},
\mbox{ and }
\mathbf{E}_{1200}\otimes_s\mathbf{E}_{1020}-\mathbf{E}_{1110}\otimes_s\mathbf{E}_{1110}$.

With the same notation as in the quartic case, the submatrix $[2\;
3]$ of \eqref{new10by10} is \textit{fixed} because it is a submatrix
of the unique Gram matrix $\mathcal{F}_{-1}$ for
$g(w,0,y,z)=f_{-1}(w,y,z)$.  So is $[5\;6]$ because
$g(0,x,y,z)=f_{-1}(x,y,z)$.  In the same way $[7\;8]$ and $[7\;9]$
are fixed.  With no other choices and $\det[1\;2\;3]=-(a-\delta)^2$
it follows that $a=\delta$ is forced.  In the same way $b=\delta$
and $c=0$.  Thus $d=-2\delta$, a contradiction, and $g$ cannot be
coercive.

\end{proof}

\begin{rem}
By Nullstellens\"atze (see pp. 56-57 of \cite{Pfi95}) every
collection of homogeneous polynomials $p_1,\ldots,p_r \in
\mathbb{C}[x_1,\ldots,x_n]$ with $1\leq r <n$ has a common
nontrivial zero $\mathbf{a}\in \mathbb{C}^n$ to the system of
equations $p_1=\cdots=p_r=0$ while the corresponding statement, for
the polynomial ring $\mathbb{R}[x_1,\ldots,x_n]$ and $\mathbb{R}^n$
in place of $\mathbb{C}^n$, holds only when all of the degrees
$d_1,\ldots,d_r$ of the polynomials $p_1,\ldots,p_r$ are not
\textit{even}.  Thus the sextic example here is required to be the
sum of at least $4$ squares in order to be $pd$ while the quartic
examples are $pd$ with but $5$ squares of quadratics in the $6$
indeterminates.  The $5$ quadratics necessarily share a nontrivial
complex root while the $4$ cubics need not, though they do.
\end{rem}
\section{The game}\mylabel{game}

Starting with the collection of $pd$ $sos$ in $P_{3,4}$ one can
obtain the coercive result \eqref{positiveresult} without using
Hilbert's theorem on ternary quartics by considering several generic
cases. One shows that the ranks of the Gram matrices arising in each
case can be built up by adding changes $\Delta$ as delineated in
Proposition \ref{matrixprop}.  When attempting to show that the $pd$
elements of $\Sigma_{4,4}$ are coercive the number of cases is
significantly higher.

The vector space $S^2(\mathbb{R}^n)$ is isomorphic to the space of
real symmetric $n\times n$ matrices by assigning $\mathbf{t}\in
S^2(\mathbb{R}^n)$ to the matrix with the coordinates
$\mathbf{t}\cdot \mathbf{e}^i \mathbf{e}^j$ as entries $1\leq
i,j\leq n$.  Every change $\Delta \in A^{2,2,n}$ yields a quadratic
form $\Delta\cdot\mathbf{t}\mathbf{t}$ that is a linear combination
of the $2\times 2$ minors of the symmetric matrix $\mathbf{t}$.  The
argument of Section 3 that can show that  a $pd$ quartic with Gram
matrix $\mathcal{G}$ has $\mathcal{G}$ as its unique Gram matrix
amounts to showing that a general matrix in
$Null(\mathcal{G})\subset S^2(\mathbb{R}^n)$ has the property that
every nontrivial linear combination of its $2\times 2$ minors is
indefinite.  For example, the $pd$ quartic
$$f=(x_1^2+x_2^2-x_3^2-x_4^2)^2+(2x_2x_3-x_3^2+x_4^2)^2+(x_1x_3-x_2x_4)^2+(x_1-x_3)^2x_4^2$$
has the basis $2\mathbf{E}_{1100},
\mathbf{E}_{2000}-\mathbf{E}_{0200},
\mathbf{E}_{2000}+\mathbf{E}_{0200}+\mathbf{E}_{0020}+\mathbf{E}_{0002},
\mathbf{E}_{0020}-\mathbf{E}_{0002}+2\mathbf{E}_{0110},
2\mathbf{E}_{1010}+2\mathbf{E}_{0101},
2\mathbf{E}_{1001}+2\mathbf{E}_{0011}$ for the null space of its
apparent Gram matrix.  The first two basis elements are the
imaginary and real parts of $\mathbf{z}\otimes\mathbf{z}$ for
$\mathbf{z}=(1,i,0,0)$ the common complex root for the $sos$ $f$.  A
general linear combination of the basis elements corresponds to the
$4\times 4$ matrix
\begin{equation}\mylabel{gamematrix}\mathbf{t}=\left(\begin{array}{cccc}b+c&a&e&f\\\\a&-b+c&d&e\\\\e&d&c+d&f
\\\\f&e&f&c-d
\end{array}\right)\end{equation}

Here, however, there is a nontrivial linear combination of the
$2\times 2$ minors that is not indefinite.  Otherwise $f$ would
provide a noncoercive example for $n=4$.  To prove that $f$ is
coercive it is necessary to produce a linear combination of minors
of the form
\begin{equation}\mylabel{minorscomb}
a^2+b^2-c^2+\Delta\cdot\mathbf{t}\mathbf{t}
\end{equation}
that is $psd$ and where the last term does not include the principle
$2\times 2$ minor $\det[1\;2]$.  This is the same observation as
\eqref{thisimplies}.  It might not be clear that the last term can
be made up of minors that yield a positive coefficient on the
monomial $c^2$ without introducing more indefiniteness. However, it
can be done.  To express $c^2$ itself as a linear combination of the
remaining 19 independent $2\times 2$ minors it is necessary to use
18 of them.  In fact, \eqref{minorscomb} can be made $pd$ and thus
$f$ possesses a Gram matrix of full rank by Proposition
\ref{matrixprop}.

By a linear change of variables in $\mathbb{R}^n$ any nontrivial
common complex root for a $pd$ quartic $sos$ may be taken to be
$(1,i,0,0,\ldots)$.  Therefore the precise setup of the principal
submatrix $[1\;2]$ of \eqref{gamematrix}, together with the presence
in some way of the variable $c$ outside $[1\;2]$, is a typical setup
for the null spaces of Gram matrices when trying to answer question
\eqref{question} in the quartic cases.  When $c$ does not occur
outside $[1\;2]$ real values may be assigned to the variables making
$[1\;2]$ and  $\mathbf{t}$  rank-1 matrices, contradicting the
positive definiteness of the form $f$.  When only $a$ and $b$ occur
in $[1\;2]$ $f$ can be written as a $sos$ that includes the term
$(x_1^2+x_2^2)^2=x_1^4+2x_1^2x_2^2+x_2^4$ in the sum.

These observations lead to the following diversion.\vskip.1in
\noindent 1. Set up the principal submatrix $[1\;2]$ of an $n\times
n$ symmetric matrix $\mathbf{t}$ exactly as in
\eqref{gamematrix}.\vskip.1in \noindent 2.  Write linear
combinations of $c$ and a number of other real variables for the
remaining entries. Variable $c$ must be used while $a$ and $b$ may
not.\vskip.1in \noindent 3.  The choices made in Step 2 are not
allowed to result in a rank-1 matrix for any choice of real variable
values.  This can usually be checked by inspecting for zeros an
$sos$ quartic form, i.e. Gram matrix,  which will have the $n\times
n$ matrix as its null space.\vskip.1in \noindent 4.  Search for a
linear combination of $2\times 2$ minors (not including
$\det[1\;2]$) which when added to $a^2+b^2-c^2$ results in a $psd$
quadratic form.\vskip.1in

When $n=4$ or $5$ there are two or three ways to win this game. Find
a setup for which the goal of Step 4 cannot be achieved.  Or, when
Step 4 does result in a $psd$ quadratic but never a $pd$ quadratic,
show that the resulting change $\Delta$ always satisfies
$\Delta\mathbf{t}\neq \mathbf{0}$ for some choice of real variable
values.  See Proposition \ref{matrixprop}.  Or, prove that neither
of these outcomes is ever possible for any $\mathbf{t}$ constructed
according to Steps 1, 2 and 3, thus proving that every $pd$ $sos$ is
coercive.

\section{Final remark on coercive integro-differential forms}

The results of this article when combined with the Aronszajn-Smith
Theorem show that there exist homogeneous constant coefficient
elliptic operators $L$ with formally positive integro-differential
forms \eqref{intform} for which a coercive estimate like
\eqref{pdecoercive} is never true.  However, such an $L$ could have
an integro-differential form like \eqref{intdiffform} which is not
formally positive but which satisfies the coercive estimate
\eqref{pdecoercive} when \eqref{intdiffform} is used on the left
side in place of \eqref{intform}.  The author claims this to be
always true in the quartic, i.e. 4th order operator, cases.  The
proof necessarily uses Agmon's characterization of coerciveness and
will appear elsewhere.  Thus Agmon's characterization is needed in
order to answer the coerciveness problem for differential operators
even when those operators possess formally positive
integro-differential forms.

\bibliographystyle{amsalpha}
\bibliography{4thOrder}

\end{document}